

\documentclass[10pt, a4paper]{article} 

\usepackage[utf8]{inputenc} 

\usepackage{graphicx} 
\usepackage[english]{babel}
\usepackage{booktabs} 
\usepackage{array} 
\usepackage{caption}

\usepackage{amsfonts,amsmath,amssymb,bbm,amsthm,amsbsy}
\usepackage[all]{xy}
\SelectTips{cm}{}

\usepackage[pagebackref,
    ,pdfborder={0 0 0}
  ,urlcolor=black,a4paper,hypertexnames=false]{hyperref}

\usepackage{color}
\usepackage{pdfcolmk}

\makeatletter
\def\blfootnote{\xdef\@thefnmark{}\@footnotetext}
\makeatother
\date{\today%
    \protect\blfootnote{\copyright{\ N.~Heuer, \today}. 
    }}

\usepackage[nouppercase]{scrpage2}

\automark[subsection]{section}
\usepackage{ifthen}
\deftripstyle{myheadings}
             {\ifthenelse{\isodd{\value{page}}}{\leftmark}{\pagemark}}
             {}
             {\ifthenelse{\isodd{\value{page}}}{\pagemark}{\rightmark}}%
             {}{}{}%
\pagestyle{myheadings}               


\def\varepsilon{\epsilon}

\def\phi{\varphi}

\newtheorem{thm}{Theorem}[section]
\newtheorem{lemma}[thm]{Lemma}
\newtheorem{prop}[thm]{Proposition}
\newtheorem{corr}[thm]{Corollary}
\newtheorem{claim}[thm]{Claim}


\newtheorem*{decision*}{Problem}

\theoremstyle{remark}
\newtheorem{rmk}[thm]{Remark}

\theoremstyle{definition}
\newtheorem{defn}[thm]{Definition}
\newtheorem*{defn*}{Definition}
\newtheorem{exmp}[thm]{Example}

\theoremstyle{theorem}
\newtheorem{theorem}{Theorem}


\usepackage{tikz}
\usepackage{tikz-cd}

\usepackage{IEEEtrantools}

\newenvironment{equ*}[1]{\begin{IEEEeqnarray*}{#1}}{\end{IEEEeqnarray*}}

\newenvironment{manualtheorem}[1]{%
  \manualtheoreminner
}{\endmanualtheoreminner}


\newcommand{\Z}{\mathbb{Z}}
\newcommand{\N}{\mathbb{N}}

\newcommand{\att}{\texttt{a}}
\newcommand{\btt}{\texttt{b}}
\newcommand{\ctt}{\texttt{c}}
\newcommand{\dtt}{\texttt{d}}

\newcommand{\xtt}{\texttt{x}}
\newcommand{\ytt}{\texttt{y}}

\newcommand{\at}{\texttt{a}}
\newcommand{\bt}{\texttt{b}}
\newcommand{\xt}{\texttt{x}}

\newcommand{\wtt}{\texttt{w}}\newcommand{\vtt}{\texttt{v}}

\newcommand{\Vcl}{\mathcal{V}}
\newcommand{\Wcl}{\mathcal{W}}

\newcommand{\orb}{\mathrm{orb}}
\newcommand{\dbi}{\mathrm{d}_{\texttt{bi}}}
\newcommand{\dcbi}{\mathrm{d}_{\texttt{cbi}}}
\newcommand{\NP}{\mathrm{NP}}

\newcommand{\cl}{\mathrm{cl}}
\newcommand{\scl}{\mathrm{scl}}
\newcommand{\Acl}{\mathcal{A}}
\newcommand{\Icl}{\mathcal{I}}
\newcommand{\Ocl}{\mathcal{O}}
\newcommand{\Bcl}{\mathcal{B}}

\newcommand{\col}{\colon}





\title{Computing commutator length is hard}

\author{Nicolaus Heuer} 

\begin{document}

\maketitle

\begin{abstract}
The commutator length $\cl_G(g)$ of an element $g \in [G,G]$ in the commutator subgroup of a group $G$ is the least number of commutators needed to express $g$ as their product.

If $G$ is a non-abelian free groups, then given an integer $n \in \N$ and an element $g \in [G,G]$ the decision problem which determines if $\cl_G(g) \leq n$ is NP-complete.
Thus, unless P=NP, there is no algorithm that computes $\cl_G(g)$ in polynomial time in terms of $|g|$, the wordlength of $g$. 
This statement remains true for groups which have a retract to a non-abelian free group, such as non-abelian right-angled Artin groups.

We will show these statements by relating commutator length to the \emph{cyclic block interchange distance} of words, which we also show to be NP-complete.
\end{abstract}


\section{Introduction}

Let $G$ be a group and let $[G,G]$ be its commutator subgroup.
For an element $g \in [G,G]$, the \emph{commutator length of $g$ ($\cl_G(g)$)} is defined as
$$
\cl_G(g) = \min \{ k \in \N \mid \exists_{x_1, \ldots, x_k, y_1, \ldots, y_k \in G} : g = [x_1, y_1] \cdots [x_k, y_k] \}.
$$
Commutator length has geometric meaning: 
Let $X$ be a pointed topological space with fundamental group $G$ and let $\gamma$ be a loop representing an element $g \in [G,G]$ in the commutator subgroup of $G$.
Then $\cl_G(g)$ is the smallest genus of a surface $\Sigma$, which maps to $X$ such that $\gamma$ factors through the induced map $\partial \Sigma \to X$.

There are several algorithms to compute $\cl_G(g)$ for a non-abelian free group $G$ \cite{GT, Culler, Bardakov, Ivanov-Fialkovski}. All these algorithms have exponential running time.
In contrast Calegari \cite{scl_rational} provided an algorithm that computes \emph{stable} commutator length in polynomial time, see Section \ref{subsec:scl}.
We thus consider the following decision problem:
\begin{defn*}[CL-$G$]
Let $G$ be a group with generating set $S$, let $\tilde{g} \in F(S)$ be a word representing an element $g \in [G,G]$ and let $k \in \N$. Then the decision problem which determines if $\cl_G(g) \leq k$ is called CL-$G$. The input has size $|\tilde{g}| + k$.
\end{defn*}

The aim of this article is to show the following result.
\begin{theorem} \label{theorem:cl np complete}
Let $G$ be a non-abelian free group. Then the decision problem CL-$G$ is NP-complete.
Suppose that $G$ has a retract to a non-abelian free group and is generated by a set $S$. Then unless P=NP, there is no polynomial time algorithm that, given an element $\tilde{g} \in F(S)$ which represents an element $g \in [G,G]$, computes $\cl_G(g)$ in polynomial time in $|\tilde{g}|_S$.
\end{theorem}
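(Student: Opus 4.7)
The proof goes through the \emph{cyclic block interchange distance} (CBI) of words, as signalled in the abstract. For the first part (NP-completeness of CL-$G$ when $G$ is a non-abelian free group) I would separately establish NP-membership and NP-hardness. For NP-membership, a witness for $\cl_G(g) \leq k$ is a tuple $(x_1, y_1, \ldots, x_k, y_k)$ with $g = [x_1, y_1] \cdots [x_k, y_k]$; verification reduces to checking a word equation in a free group, which is polynomial. The non-trivial input is that the witness can be chosen with $|x_i| + |y_i|$ bounded by a polynomial in $|g| + k$. This follows from the surface-theoretic model of commutator length: a minimal-genus filling of $g$ may be chosen with combinatorial complexity controlled linearly by $|g|$ and $k$, and reading off edge labels yields the required length bound.

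For NP-hardness, the plan is a two-stage reduction. First, prove that CBI itself is NP-complete by reducing a suitable known NP-complete combinatorial problem (for example a sorting-by-transpositions or partition-style problem) to CBI. Second, construct a polynomial-time many-one reduction CBI $\leq_P$ CL-$G$: from a pair of cyclic words $(u, v)$ over an alphabet build an explicit element $g(u,v) \in [F, F]$ of a non-abelian free subgroup $F \leq G$ such that $\cl_G(g(u, v))$ equals the CBI distance between $u$ and $v$, up to a known constant. The correctness of this reduction is the main obstacle: both directions require a precise dictionary between standard genus-$k$ surface presentations witnessing small $\cl_G$ and sequences of $k$ block interchanges witnessing small CBI distance. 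Extracting a short block-interchange sequence from a low-genus filling is typically the harder direction, and is where the bulk of the combinatorial work lies.

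For the second statement, suppose $\iota \colon F \hookrightarrow G$ is a non-abelian free subgroup that is a retract of $G$, with retraction $\pi \colon G \twoheadrightarrow F$ satisfying $\pi \circ \iota = \mathrm{id}_F$. For any $g \in [F, F]$, applying $\pi$ to any commutator expression of $\iota(g)$ in $G$ yields a commutator expression of $g$ in $F$ of the same length; combined with the obvious inequality in the other direction, this gives $\cl_G(\iota(g)) = \cl_F(g)$. A polynomial-time algorithm computing $\cl_G$ would therefore compute $\cl_F$ in polynomial time, which by the first part of the theorem (together with the standard binary-search reduction from the function problem to the decision problem) implies P=NP.
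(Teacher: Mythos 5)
Your outline follows the same architecture as the paper: NP-membership via a polynomially verifiable certificate, NP-hardness by chaining a partition-type problem through the cyclic block interchange distance, and the retract identity $\cl_G(\iota(g)) = \cl_F(g)$ for the second statement. Two implementation points differ, both in the paper's favour. For membership, the paper does not need your surface-theoretic length bound on the tuple $(x_i,y_i)$ (which is true but is itself a theorem requiring proof): it certifies $\cl_F(g)\leq k$ by a fixed-point-free pairing $\pi$ of the letters of $g$ in the sense of Bardakov, an object of size $|g|$ by construction, with $\cl$ read off from the orbit count of $\sigma\pi$ in linear time. For the hardness reduction, the exact dictionary is $\dcbi(v,w) = \cl_{F(\Acl)}(v + w^{-1})$ for the \emph{chain} $v+w^{-1}$, with no additive constant; the passage to single elements is not a many-one reduction to one word $g(u,v)$ but uses $\cl_{F(\Acl)}(v+w^{-1}) = \min\{\cl_{F(\Acl)}(v\tilde w^{-1})\}$ over the $|w|$ cyclic conjugates $\tilde w$ of $w$, querying CL once per conjugate. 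Finally, the ``bulk of the combinatorial work'' you explicitly defer is exactly where the paper's effort lies: extracting from an extremal Bardakov pairing a cyclic block interchange that lowers $\cl$ by one (an orbit analysis of $\sigma\pi$, Lemma \ref{lemma:special points for alpha} and Lemma \ref{lemma:modifying orbits}), and a Brooks-quasimorphism-style counting function $\nu$ certifying the reduction of $3$-PARTITION to CBI (Lemma \ref{lemma: nu}), plus the further reduction to two-letter alphabets. As a plan your proposal is sound and matches the paper; as a proof it leaves both technical cores open.
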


We note that the decision problem which determines if $\cl_G(g) \leq k$ for a fixed integer $k$ is in P; see Proposition \ref{prop:cl for fixed length}. However, the same problem for \emph{chains} (see Section \ref{subsec:cl}) is NP-complete, even if $k=0$; see \cite{SolvProblemFreeGroup} and Theorem \ref{thm:cl chains np compl}.

\subsection*{Method}
There are several algorithms to compute commutator length in non-abelian free groups $G$ (see Section \ref{subsec:algorithms for computing cl}) from which we see that CL-$G$ is in $\NP$.

To show that CL-$G$ is NP-hard, we relate commutator length in free groups to a certain distance function $\dcbi$ between positive words.

 For an alphabet $\Acl$, let $\Acl^+$ be the set of \emph{positive} words (i.e.\ without inverse letters) in $\Acl$.
 For two words $v, w \in \Acl^+$ we say that $w$ is a \emph{cyclic block interchange} of $v$ if there are cyclic permutations $v'$ (resp. $w'$) of $v$ (resp. $w$) such that there are words $w_1, w_2, w_3, w_4 \in \Acl^+ \cup \{ \emptyset \}$ with $v' = w_1 w_2 w_3 w_4$ and $w' = w_1 w_4 w_3 w_2$.

We say that two words $v,w \in \Acl^+$ are \emph{related}, if $v$ and $w$ contain the same number of each letter of $\Acl$. For two such words we 
define $\dcbi(v,w)$ as follows. If $w$ is a cyclic permutation of $v$ we set $\dcbi(v,w)=0$. Else, let $\dcbi(v,w)$ be the smallest integer $k$ such that there is a sequence $z^0, \ldots, z^k$ of positive words with $z^0 = v$, $z^k = w$ and where each $z^i$ is a cyclic block interchange of  $z^{i-1}$. The pseudometric $\dcbi$ between related words of $\Acl^+$ is called the \emph{cyclic block interchange distance}; see Example \ref{exmp:cbi}.
We will consider the following decision problem:
\begin{defn*}[CBI-$\Acl$]
Let $w, v \in \Acl^+$ be two related words and let $k \in \N$. Then the decision problem which determines if $\dcbi(v,w) \leq k$ is called CBI-$\Acl$. The size of the input is $|v|+|w|$.
\end{defn*}
We will first show how commutator length is related to cyclic block interchange distance, using a method similar to \cite{Ivanov-Fialkovski}.
\begin{theorem} \label{theorem: cl may be used to compute cl}
Let $\Acl$ be an alphabet, let $F(\Acl)$ be the free group on $\Acl$ and let $\cl_{F(\Acl)}$, $\dcbi$, CL-$F(\Acl)$ and and CBI-$\Acl$ be as above. Then
\begin{itemize}
\item[(i)] if $v,w \in \Acl^+$ are related then
$$
\dcbi(v,w) = \cl_{F(\Acl)}(v + w^{-1}) \mbox{, and}
$$
\item[(ii)] there is a polynomial-time reduction from CBI-$\Acl$ to CL-$F(\Acl)$.
\end{itemize}
\end{theorem}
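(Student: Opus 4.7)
The plan is to establish the two inequalities of (i) separately and then deduce (ii) from (i) via a standard ``fresh letter'' reduction that converts the chain commutator length $\cl_{F(\Acl)}(v+w^{-1})$ into an ordinary commutator length of a single element.

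For the easy direction $\cl_{F(\Acl)}(v + w^{-1}) \leq \dcbi(v,w)$, I induct on $k = \dcbi(v,w)$. The base case $k = 0$ means $w$ is a cyclic permutation of $v$, so $v + w^{-1}$ bounds an annulus and has chain commutator length $0$. For the inductive step it suffices to show that if $u = w_1 w_2 w_3 w_4$ and $u' = w_1 w_4 w_3 w_2$ differ by a single cyclic block interchange, then the chain $u + (u')^{-1}$ bounds a genus-one surface with two boundary components. An explicit such surface is built from the cylinder with $u$ on the top boundary and $u'$ on the bottom: the $w_1$- and $w_3$-bands go straight across while the $w_2$- and $w_4$-bands are routed to cross once, contributing exactly one handle. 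Since genus is additive under gluing two-boundary surfaces along a common boundary component, concatenating $k$ such genus-one pieces yields a genus-$k$ surface bounding $v + w^{-1}$.

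The harder inequality $\dcbi(v,w) \leq \cl_{F(\Acl)}(v + w^{-1})$ is where I expect the main technical obstacle. My approach is to use the combinatorial description of minimum-genus surfaces bounding a chain in a free group: any such surface is determined (up to homotopy) by a bipartite matching between the positions of each letter in $v$ and in $w$, with the genus computed from the cycle structure of the associated permutation, as in the standard Culler / Goldstein--Turner analysis. I then induct on this genus. In the base case, a genus-zero matching forces a non-crossing pairing on a cylinder, so $v$ and $w$ are cyclic permutations. For the inductive step the key lemma is that any minimum-genus matching of genus $g \geq 1$ admits a ``non-separating handle'' whose resolution decreases the genus by $1$ and corresponds to performing a single cyclic block interchange on the pair $(v, w)$: combinatorially, one locates two pairs of crossing chords whose reconfiguration can be undone by swapping two blocks of $v$ (or of $w$) after a suitable cyclic rotation. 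Identifying such chords and verifying that the resulting words differ by exactly one valid cyclic block interchange is the heart of the proof and parallels the approach of~\cite{Ivanov-Fialkovski}.

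For (ii), I use the identity
$$
\cl_{F(\Acl)}(v + w^{-1}) \;=\; \cl_{F(\Acl)}(v t w^{-1} t^{-1}) - 1,
$$
where $t \in F(\Acl)$ is chosen to behave like a fresh letter with respect to $v$ and $w$ (for instance, a sufficiently long alternating word $(a_1 a_2)^N$ with $a_1 \neq a_2 \in \Acl$, chosen so that no nontrivial matching can pair letters of $t$ with letters of $v$ or $w$ while decreasing genus). Both directions of this identity are direct surface arguments: attaching a tube labelled $t$ between the two boundary components of a chain-bounding surface adds exactly one handle, while any surface bounding $vtw^{-1}t^{-1}$ can be cut along a preimage of the $t$-loop to recover a chain-bounding surface of one lower genus. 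Combined with (i), the map $(v, w, k) \mapsto (v t w^{-1} t^{-1},\, k+1)$ is the desired polynomial-time reduction from CBI-$\Acl$ to CL-$F(\Acl)$.
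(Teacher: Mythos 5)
Your treatment of the easy inequality $\cl_{F(\Acl)}(v+w^{-1})\le\dcbi(v,w)$ is sound and matches the paper's (the paper does it algebraically via $\cl([a_2a_3,a_4a_2^{-1}])\le 1$ plus subadditivity of $\cl$ on chains; your genus-one cylinder with crossed bands is the same estimate in pictures). The gap in part (i) is the reverse inequality: you correctly identify the key lemma --- that a minimum-genus pairing of positive genus admits a configuration whose ``resolution'' is a single cyclic block interchange of $v$ dropping the genus by exactly one --- but you then declare it ``the heart of the proof'' without proving it. That lemma is essentially all of the technical content. In the paper it requires (a) a classification of the orbit structure of $\sigma\pi$ for an extremal Bardakov pairing, showing one always finds either three cyclically ordered special points in a single orbit or two pairs of points in two distinct orbits (the minimality argument on $|\alpha^2(i_0^+)-i_0^+|$ is not a triviality), and (b) a verification, for each of the two resulting block-interchange permutations, that the number of orbits increases by exactly $2$, hence $\cl$ drops by exactly $1$. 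You would also need to justify that the special points can always be taken among the positions of $v$ (so that the interchange is performed on $v$ and the intermediate words stay positive); the paper arranges this because $\alpha^2$ preserves $\Icl^+$. As written, your part (i) is a strategy statement, not a proof.

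Part (ii) has a more serious problem: the identity $\cl_{F(\Acl)}(v+w^{-1})=\cl_{F(\Acl)}(vtw^{-1}t^{-1})-1$ is unjustified and, for $t$ chosen inside $F(\Acl)$, not obviously true. By definition $\cl_{F(\Acl)}(v+w^{-1})=\min_{s}\cl_{F(\Acl)}(vsw^{-1}s^{-1})$, so the inequality $\cl(v+w^{-1})\le\cl(vtw^{-1}t^{-1})$ is free and all the content of your reduction sits in the claim that the specific conjugator $t=(a_1a_2)^N$ overshoots the minimum by \emph{exactly} one. Your surface argument (``cut along a preimage of the $t$-loop'') only makes sense when $t$ is a free generator; for $t=(a_1a_2)^N$ there is no $t$-edge in the wedge of circles, the letters of $t$ and $t^{-1}$ can legitimately be paired with letters of $v$ and $w^{-1}$ in a Bardakov pairing, and ruling out that such pairings save genus is a genuine counting argument of the same flavour as the paper's binary-alphabet reduction --- it is not obtainable ``for instance'' by taking $N$ large without proof. (Even for a genuinely fresh letter, which would land you in $F(\Acl\cup\{t\})$ rather than $F(\Acl)$, the ``$-1$'' direction needs the separating-arc case to be excluded; here that follows from $v\notin[F,F]$, but you don't say so, and the change of group would then have to be repaired since $F(\Acl\cup\{t\})$ is not a retract of $F(\Acl)$.) The paper's route is different and avoids all of this: it proves that the minimum over conjugators is already attained at a cyclic conjugate of $w$, i.e.\ $\cl(v+w^{-1})=\min\{\cl(v\tilde w^{-1})\}$ over the $|w|$ cyclic conjugates $\tilde w$, and then reduces CBI-$\Acl$ to CL-$F(\Acl)$ by making $|w|$ oracle calls with no change to $k$. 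If you want to keep your single-query reduction you must actually prove your identity; otherwise you should substitute an argument like the paper's Claim on reduction to a single commutator.
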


Several types of interchange distances of words have been previously studied due to their connections to DNA mutations; see Remark \ref{rmk:variants of cyclic block interchange} and \ref{rmk:dna of bacteria}.
Using a polynomial reduction of $3$-PARTITION to CBI-$\Acl$ similar to \cite{rev_transp} we show:
\begin{theorem} \label{theorem:cbi np complete}
If $|\Acl|>2$ then decision problem CBI-$\Acl$ is $\NP$-complete.
\end{theorem}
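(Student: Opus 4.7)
The plan is to establish membership in $\NP$ and then $\NP$-hardness via a polynomial-time reduction from the strongly $\NP$-hard problem 3-PARTITION, following the template used in~\cite{rev_transp} for reversal-transposition distance. Membership in $\NP$ is straightforward: a YES-certificate for $\dcbi(v,w)\leq k$ consists of the sequence $z^0=v, z^1, \ldots, z^{k'}=w$ of intermediate positive words with $k' \leq k$, annotated at each step with the block decomposition $w_1 w_2 w_3 w_4$ and the cyclic rotation used. Since the trivial upper bound $\dcbi(v,w)=O(|v|)$ applies (a single letter can be relocated by one interchange), we may assume $k$ is polynomially bounded in $|v|+|w|$, so the certificate has polynomial size and is verifiable in polynomial time.

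For $\NP$-hardness, I would fix three distinct letters $\at,\btt,\ctt\in\Acl$ and from a 3-PARTITION instance with $3m$ positive integers $a_1,\ldots,a_{3m}$ satisfying $\sum a_i = mB$ and $B/4<a_i<B/2$, construct
\[
v := \at^{a_1}\btt\, \at^{a_2}\btt\, \cdots\, \at^{a_{3m}}\btt\, \ctt^m
\quad\text{and}\quad
w := \bigl(\at^{B} \btt^{3} \ctt\bigr)^m.
\]
These are related (each contains $mB$ copies of $\at$, $3m$ copies of $\btt$, and $m$ copies of $\ctt$), and the encoding is polynomial. Set $k := f(m)$ for an explicit polynomially-bounded function $f$ determined by the forward direction. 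Forward direction: given a valid 3-partition $\{S_1,\ldots,S_m\}$, one transforms $v$ into $w$ using $k$ cyclic block interchanges, where each interchange regroups the three $\at^{a_i}$ blocks of some triple $S_j$ together with the associated $\btt$-separators and a $\ctt$-marker; a bookkeeping count yields $\dcbi(v,w) \leq k$.

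The backward direction is the technical core and the main obstacle. Assuming $\dcbi(v,w)\leq k$ is realized by some sequence of cyclic block interchanges, one must extract a 3-partition from that cheap transformation. The standard route is to design a potential $\Phi$---a weighted count of cyclic breakpoints, refined to be sensitive to the relative positions of $\at$-blocks and $\ctt$-separators---such that a single cyclic block interchange changes $\Phi$ by at most a fixed constant $c$. The total change $|\Phi(v)-\Phi(w)|$ then lower-bounds $c \cdot \dcbi(v,w)$, and $k$ is chosen precisely at the value achievable by a genuine 3-partition. The strict constraint $B/4<a_i<B/2$ is essential: it forces any grouping of $\at^{a_i}$-blocks whose total sum between two consecutive $\ctt$-separators equals $B$ to consist of exactly three summands, so $\Phi$-minimizing configurations correspond to 3-partitions. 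Engineering a potential whose per-step change is tight enough to make the forward and backward bounds meet---so that the threshold $k$ cleanly separates YES from NO instances---is the core technical difficulty; the constructions of~\cite{rev_transp} for reversal-transposition distance supply the adaptable template, which must be transported from the linear to the cyclic setting and from permutations to words with repeated letters.
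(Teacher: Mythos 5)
Your overall strategy coincides with the paper's: membership in $\NP$ via the certificate of intermediate words, and $\NP$-hardness by reducing the strongly $\NP$-complete problem 3-PARTITION using words built from $\at^{a_i}$-type blocks separated by marker letters, with the backward direction controlled by a potential function whose per-step change under a cyclic block interchange is bounded. However, there is a genuine gap: the potential $\Phi$ is never constructed, and without it the backward direction --- which you yourself identify as ``the technical core'' --- is not proved. The difficulty is not only to find a $\Phi$ with $|\Phi(x)-\Phi(y)|\leq c$ per interchange and with $|\Phi(v)-\Phi(w)| = c\cdot k$ exactly; one also needs a \emph{rigidity} statement: whenever a single interchange attains the extremal change $c$, the four cut points cannot fall inside a run of the letter encoding the $a_i$'s. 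Only this rigidity lets you conclude that an optimal sequence of $k$ interchanges moves the $\at^{a_i}$-blocks intact and hence reads off a 3-partition. In the paper this is Lemma \ref{lemma: nu}: the potential is an explicit signed sum of cyclic two-letter subword counts $\nu = \nu_{\att\att}+\nu_{\btt\ctt}+\nu_{\ctt\dtt}+\nu_{\dtt\btt}-\nu_{\att\ctt}-\nu_{\ctt\ctt}-\nu_{\dtt\dtt}-\nu_{\dtt\att}-\nu_{\btt\btt}$ (a Brooks-type count), the per-step bound $6$ is reduced to a finite check over all $16^4$ choices of the two boundary letters of each block (verified by computer), and the same check establishes the rigidity clause. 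Your proposal contains no candidate for $\Phi$ and no argument for either the bound or the rigidity, so the reduction is not complete.

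Two smaller points. First, your forward direction is underspecified: a single cyclic block interchange swaps only two blocks, so ``regrouping the three $\at^{a_i}$ blocks of a triple'' in one step is not possible; the paper instead moves each of the $3n$ blocks individually, giving the explicit threshold $k=3n$, and your reduction must likewise output a concrete $k=f(m)$ matching the lower bound from $\Phi$ exactly. Second, the paper uses a four-letter gadget (with two distinct delimiters $\btt$ and $\dtt$ around each $\ctt^{a_i}$) precisely because the signed breakpoint count needs enough letter-pair types to achieve the tight constant with rigidity; it then separately reduces the four-letter case to a two-letter alphabet via a blow-up map $\lambda$ (Lemma \ref{lemma: binary alphabets}) using Hall's marriage theorem. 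If you work with only three letters you would need to re-verify that a suitable potential with the rigidity property exists for your gadget, which is not automatic.
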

Using Theorems \ref{theorem: cl may be used to compute cl} and \ref{theorem:cbi np complete} we may conclude Theorem \ref{theorem:cl np complete} in Section \ref{sec:proof of thm a}.

\subsection*{Organization of this article}
In Section \ref{sec:preliminaries} we will recall well known results on commutator length and related topics, including a survey of the algorithms to compute commutator length in Section \ref{subsec:algorithms for computing cl}.
In Section \ref{sec:cbi cl} we will prove Theorem \ref{theorem: cl may be used to compute cl} which shows how commutator length may be used to compute the cyclic block interchange distance.
In Section \ref{sec:cbi} we will prove Theorem \ref{theorem:cbi np complete} which shows that for any alphabet $\Acl$ the decision problem CBI-$\Acl$ is NP-hard.
In Section \ref{sec:proof of thm a} we prove Theorem \ref{theorem:cl np complete}.
 In the Appendix (Section \ref{sec:appendix}) we will provide a MATLAB code needed to check some cases of Lemma \ref{lemma: nu}.

\subsection*{Acknowledgments}
I would like to thank Martin Bridson for many very helpful discussions on this problem. This project started in joint discussion with Michał Marcinkowski and Clara L{\"o}h at the University of Regensburg. I would like to thank both of them for helpful discussions and the university (supported by the SFB 1085 Higher Invariants) for their hospitality. I would like to thank Clara L{\"o}h particularly for many helpful comments on a previous version of this paper.
Finally, I would like to thank Joana Guiro for her helpful insights into the genes of bacteria.

\section{Preliminaries} \label{sec:preliminaries}

In this section we will survey well known results relating to commutator length.
We will discuss general properties of commutator length and define commutator length on chains in Section \ref{subsec:cl}. In Section \ref{subsec:scl} we briefly discuss \emph{stable} commutator length. In Section \ref{subsec:algorithms for computing cl} we recall the algorithms available to compute commutator length in free groups. In Section \ref{subsec:cl on chains np complete} we recall results of \cite{SolvProblemFreeGroup} which show that the problem which decides if a \emph{chain} has commutator length $0$ is NP-complete.

\subsection{Commutator length} \label{subsec:cl}

Let $G$ be a group and let $[G,G]$ be its commutator subgroup. For an element $g \in [G,G]$ the \emph{commutator length of $g$} ($\cl_G(g)$) is defined as
$$
\cl_G(g) = \min \{ k \in \N \mid \exists_{x_1, \ldots, x_k, y_1, \ldots, y_k \in G} : g = [x_1, y_1] \cdots [x_k, y_k] \},
$$
where for two elements $x,y \in G$, $[x,y] = x y x^{-1} y^{-1}$ denotes the commutator bracket. It is easy to see that commutator length is invariant under conjugation.

A formal sum $g_1+ \cdots + g_n$ of elements in $G$ will be called a \emph{chain}, if $g_1 \cdots g_n \in [G,G]$. The set of all chains of $G$ is denoted by $B_1(G)$.
For a chain $g_1+ \cdots + g_n$ as above we set
$$
\cl_G(g_1 + \cdots + g_n) = \min \{ \cl_G(t_1 g_1 t_1^{-1} \cdots t_n g_n t_n^{-1}) \mid t_1, \ldots, t_n \in G \}.
$$

\begin{prop}[Properties of commutator length] \label{prop:formulas for cl}
Let $G$ be a group and let $\cl_G$ be as above. Then we have the following identities.
\begin{itemize}
\item If $\Phi \col G \to H$ is a homomorphism and $g \in [G,G]$, then $\cl_{G}(g) \geq \cl_{H}(\Phi(g))$. If $H < G$ is a retract of $G$ and $g \in [H,H]
$ then $\cl_{H}(g) = \cl_{G}(g)$.
\item 
For any $g,h \in G$, we have $\cl_G(h g h^{-1}) = \cl_G(g)$. Thus, if $G$ is a non-abelian free group, then commutator length is invariant under a cyclic permutation of the letters of $g$.
\item For two chains $c_1, c_2 \in B_1(G)$ we have $\cl_G(c_1 + c_2) \leq \cl_G(c_1) + \cl_G(c_2)$.
\item For an element $g \in G$ and a chain $c \in B_1(G)$ we have $\cl_G(g + g^{-1} + c) = \cl_G(c)$.
\end{itemize}
\end{prop}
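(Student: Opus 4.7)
The plan is to verify the four identities in sequence using elementary algebraic manipulations; the first three follow directly from the definitions and the identity $h[x,y]h^{-1} = [hxh^{-1},hyh^{-1}]$, while the fourth combines subadditivity with the vanishing of $\cl_G$ on $g + g^{-1}$.

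For (i), applying the homomorphism $\Phi$ to a length-$k$ commutator expression $g = [x_1,y_1]\cdots[x_k,y_k]$ yields $\Phi(g) = [\Phi(x_1),\Phi(y_1)]\cdots[\Phi(x_k),\Phi(y_k)]$, so $\cl_H(\Phi(g)) \leq \cl_G(g)$. In the retract case, this gives $\cl_G(g) \leq \cl_H(g)$ via the inclusion $\iota \col H \inj G$, and $\cl_H(g) = \cl_H(r(g)) \leq \cl_G(g)$ via the retraction $r \col G \sur H$ (using that $r$ fixes $H$ pointwise). For (ii), the identity $h[x,y]h^{-1} = [hxh^{-1},hyh^{-1}]$ combined with the distributivity of conjugation over products transports any commutator expression for $g$ into one for $hgh^{-1}$ of the same length; applying the same observation to $h^{-1}$ yields the reverse inequality. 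In a free group, cyclic permutation of the letters is conjugation by an initial (or terminal) letter. For (iii), I fix conjugators realising $\cl_G(c_1)$ and $\cl_G(c_2)$, together with commutator expressions of those lengths; using the combined tuple of conjugators for $c_1 + c_2$ produces the product of the two realisers, which admits a commutator expression of length $\cl_G(c_1) + \cl_G(c_2)$ by concatenation.

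For (iv), the inequality $\cl_G(g + g^{-1} + c) \leq \cl_G(c)$ follows from (iii) together with the observation that $\cl_G(g + g^{-1}) = 0$: taking the two conjugators equal gives $(tgt^{-1})(tg^{-1}t^{-1}) = e$, which is the empty product of commutators. The reverse inequality is the more substantive direction. The key algebraic ingredient is the identity $tgt^{-1} \cdot sg^{-1}s^{-1} = t[g,t^{-1}s]t^{-1}$, which exhibits the contribution of the two $g$-terms as a single conjugated commutator, regardless of the conjugators chosen. Starting from a realisation of $\cl_G(g + g^{-1} + c)$ as a product of $k$ commutators, one then needs to absorb this extra commutator into the existing expression without inflating the count: this absorption step is the main obstacle and requires a delicate rearrangement of the commutator factors (possibly together with a rechoice of the conjugators $t_i$ for the summands of $c$, making use of the freedom in the defining minimum).
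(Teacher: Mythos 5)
Your treatment of the first three items and of the inequality $\cl_G(g+g^{-1}+c)\leq\cl_G(c)$ in the fourth is correct and is the standard argument (the paper itself offers no proof here, deferring to Calegari's book). The genuine issue is the reverse inequality of the fourth item, which you rightly single out as the substantive direction and then leave open. The ``absorption step'' you postpone cannot be carried out: your own computation, starting from a length-$k$ commutator expression for a realiser of $g+g^{-1}+c$ and peeling off the single commutator $t g t^{-1}\, s g^{-1} s^{-1}=[tgt^{-1},\,sg^{-1}\cdot t]$ (up to conjugation), only yields $\cl_G(c)\leq \cl_G(g+g^{-1}+c)+1$, and this additive loss of $1$ is unavoidable.

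Indeed, with the definition of $\cl_G$ on chains used in the paper, the stated identity is false, so no rearrangement of commutator factors or rechoice of conjugators will close your gap. Take $G=F(\att,\btt)$, $g=\att$ and $c=[\att,\btt]$, and choose conjugators $t_1=\btt$, $t_2=t_3=e$ for the chain $\att+\att^{-1}+[\att,\btt]$. Then
\[
(\btt\att\btt^{-1})\cdot \att^{-1}\cdot(\att\btt\att^{-1}\btt^{-1}) \;=\; \btt\att\btt^{-1}\cdot\btt\att^{-1}\btt^{-1}\;=\;e,
\]
so $\cl_G(\att+\att^{-1}+[\att,\btt])=0$, whereas $\cl_G([\att,\btt])=1$. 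The correct general statement is the two-sided bound $\cl_G(c)-1\leq\cl_G(g+g^{-1}+c)\leq\cl_G(c)$, which is exactly what your argument proves; the exact identity holds for \emph{stable} commutator length, where the additive defect of $1$ vanishes after homogenisation, and this is presumably the source of the claim. You should therefore not attempt to complete the missing direction as stated, but instead record the weaker two-sided bound (which is all that is needed for the applications in the paper, e.g. the telescoping estimate in the proof of Theorem~\ref{theorem: cl may be used to compute cl}(i) can be run directly with Claim~\ref{claim: a cbi of b then cl 1}).
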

All of these basic statements may be found in \cite[Section 2]{Calegari}.

\subsection{Stable commutator length} \label{subsec:scl}

For an element $g \in [G,G]$ we define the \emph{stable commutator length} of $g$ in $G$ by setting
$$
\scl_G(g) = \lim_{m \to \infty} \frac{\cl_G(g^m)}{m}
$$
and for a chain $g_1 + \cdots + g_n \in B_1(G)$ we define 
$$
\scl_G(g_1 + \cdots + g_n) = \lim_{m \to \infty} \frac{\cl_G(g_1^m + \cdots + g_n^m)}{m}.
$$

This invariant has seen a vast development in recent years, most prominently by Calegari and others
\cite{Calegari, heuer_scl_rp, heuer_scl_simvol, heuer_chen_spectral}.

Computing $\scl_G$ may seem like a harder problem than computing $\cl_G$. However, Calegari showed that if $G$ is a non-abelian free group, then $\scl_G(g)$ may be computed in polynomial time in the wordlength of $G$ \cite{scl_rational}.

On the other hand Brantner \cite{Lukas} showed that computing $\scl_G$ is NP-complete on free groups if powers of elements in the free group have logarithmic size.

\begin{exmp} \label{exmp:scl}
Using the algorithm of Calegari \cite{Calegari}, one may compute that $\scl_{F(\att, \btt)}([\att, \btt]) = 1/2$ and that $\scl_{F(\att, \btt)} (\att \btt + \att^{-1} + \btt^{-1}) = 1/2$.
\end{exmp}
There is always a gap of $1/2$ for $\scl$ on free groups \cite{DH, heuer_raags}.

\subsection{Algorithms for computing commutator length in free groups} \label{subsec:algorithms for computing cl}

In this section we survey algorithms to compute commutator length in non-abelian free groups.
The first algorithm to compute commutator length was done by Goldstein and Turner in \cite{GT}. 
Later Culler gave a more geometric algorithm \cite{Culler}, using surface maps. Based on this algorithm, Bardakov gave a purely algebraic algorithm \cite{Bardakov}. Later, Fialkovski and Ivanov \cite{Ivanov-Fialkovski} gave a combinatorial algorithm to compute commutator length, using interchanges of subwords, which we will adapt for Section \ref{sec:cbi cl}.

We state the algorithms of Bardakov and Fialkovski--Ivanov in Sections \ref{subsec:bardakov} and \ref{subsec:fi} respectively. In Corollary \ref{corr:cl is in np} we deduce that CL is in NP and in Proposition \ref{prop:cl for fixed length} we show that the decision problem CL-$G$ \emph{is} in P for a fixed $k$.

We illustrate the difficulty to compute commutator length by the following example:
\begin{exmp}[\cite{DH}] \label{exmp:cl}
We have that $\cl_{F(\att,\btt)}([\att,\btt]^n) = \lfloor \frac{n-1}{2} \rfloor + 1$. Similarly, we see that $\cl_{F(\att,\btt)}(\att^n \btt^n + (\att \btt)^{-n}) = \lfloor \frac{n}{2}\rfloor$.
\end{exmp}
Compare this example with Example \ref{exmp:scl}.

\subsubsection{Bardakov's algorithm} \label{subsec:bardakov}

We describe the algorithm of Bardakov to compute commutator length and extend his algorithm to chains.

Let $\Acl$ be an alphabet and let $F=F(\Acl)$ be the free group on $\Acl$.
An element $x \in F$ will be called \emph{letter} if either $x \in \Acl$ or $x^{-1} \in \Acl$.
To stress that an element is a letter, we will write it in code-font e.g. $\at, \bt, \ldots$.

Let $w = w_1 + \ldots + w_k \in B_1(F)$ be a chain where each $w_j$ is cyclically reduced, has wordlength $n_j$, and $w_j = \xtt_{1,j} \cdots \xtt_{n_j,j}$. We call $\Icl_w = \{ (i,j) \mid 1 \leq i \leq n_j, 1 \leq j \leq k \}$ the \emph{index set} of the chain $w$.

Following the notation of \cite{Bardakov}, a \emph{pairing of $w$} is an map $\pi \col \Icl_w \to \Icl_w$ such that
\begin{itemize}
\item  $\pi$ is a fixed point free involution, that is $\pi$ does not have a fixed point and $\pi(\pi(i,j)) = (i,j)$ for all $(i,j) \in \Icl_w$, and

\item for any $(i,j) \in \Icl_w$ we have that $\xt_{\pi(i,j)} = \xt_{i,j}^{-1}$.
\end{itemize}

The set of all pairings of $w$ is denoted by $\Pi_w$.
We define $\sigma \col \Icl_w \to \Icl_w$ via
\[
\sigma \col (i,j) \mapsto \begin{cases} (i+1,j) &  \text{ if } i<n_j \\
(1,j) & \text{ if} i = n_j
\end{cases}
\]
i.e. $\sigma$ cycles through each index of the words.

\begin{thm}[\cite{Bardakov}] \label{thm:bardakov}
For a chain $w = w_1 + \cdots + w_k \in \textrm{B}_1(F)$ we have
\[
\cl_F(w) = \frac{|w|}{4} - \frac{o}{2} + \frac{2-k}{2}
\]
where $|w| = \sum_{j=1}^k |w_j|$ is the total word length of $w$ and $o = \max \{ \orb(\sigma \pi) \mid \pi \in \Pi_w \}$.
\end{thm}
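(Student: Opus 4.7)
The plan is to prove the formula through the classical correspondence between commutator factorizations in $F$ and closed oriented surfaces built from labelled polygons. For each component $w_j$, I would model it as an oriented $n_j$-gon $P_j$ whose sides in cyclic order are labelled by the letters $\xt_{1,j}, \ldots, \xt_{n_j,j}$. A pairing $\pi \in \Pi_w$ then prescribes, for each side $(i,j)$, how to glue it to the side $\pi(i,j)$: the defining condition $\xt_{\pi(i,j)} = \xt_{i,j}^{-1}$ makes the identification orientation-reversing and consistent with the letter-labels, and because $\pi$ is fixed-point-free no side remains unglued. The resulting closed 2-complex $\Sigma_\pi$ comes with a natural map to the bouquet of circles representing $F$ that sends $\partial P_j$ to the loop $w_j$, and after splitting any non-manifold vertices — an operation that does not alter the counts below — it becomes a closed oriented (possibly disconnected) surface.

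Next I would compute $\chi(\Sigma_\pi)$ as a CW-complex: it has $F = k$ polygonal faces, $E = |w|/2$ edges (each pair of identified sides becoming one edge), and $V = \orb(\sigma\pi)$ vertices. The vertex count is the key observation: corners of the polygons are indexed by $\Icl_w$, and passing from one corner to the next around a vertex star alternates between moving to the next corner along the polygon boundary (given by $\sigma$) and jumping across the glued edge (given by $\pi$), so vertices are in bijection with orbits of $\sigma\pi$. Hence $\chi(\Sigma_\pi) = \orb(\sigma\pi) - |w|/2 + k$, and if $\Sigma_\pi$ is connected of genus $g$, the identity $\chi = 2-2g$ rearranges to
\[
g \;=\; \frac{|w|}{4} \;-\; \frac{\orb(\sigma\pi)}{2} \;+\; \frac{2-k}{2}.
\]

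To conclude, I would establish $\cl_F(w) = \min_\pi g(\Sigma_\pi)$, equivalently the claimed formula with $o = \max \orb(\sigma\pi)$. For the upper bound, starting from $\pi$ realising $o$, excise the open polygons from $\Sigma_\pi$: what remains is an oriented surface with $k$ boundary circles, the $j$-th spelling a conjugate of $w_j$; by the standard surface-group presentation the product of these conjugates equals a product of $g(\Sigma_\pi)$ commutators, giving $\cl_F(w) \leq g(\Sigma_\pi)$. For the lower bound, take an optimal expression $t_1 w_1 t_1^{-1} \cdots t_k w_k t_k^{-1} = [x_1, y_1] \cdots [x_g, y_g]$, assemble the standard genus-$g$ surface whose boundary spells this product, cap off the $k$ boundary pieces with the polygons $P_j$ to produce a closed genus-$g$ surface, and read off the pairing from its interior edge identifications; this yields $\orb(\sigma\pi) \geq |w|/2 + 2 - k - 2g$, which rearranges to the required lower bound. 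The main obstacle will be this last step: extracting a pairing from an arbitrary commutator factorization requires first tightening the associated surface map to the bouquet so that no interior vertices survive beyond the polygon corners, and then matching the cyclic labelling of each $\partial P_j$ with the boundary of the surface. The freedom of conjugation in the chain definition has to be absorbed somewhere; it sits naturally in the choice of cyclic rotation with which $\partial P_j$ is attached, i.e.\ in the flexibility of $\pi$ itself, and one checks that the formula is already invariant under cyclic rotation of each $w_j$.
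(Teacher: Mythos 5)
The paper offers no proof of this statement at all: it cites \cite{Bardakov} and adds only the remark that his argument for a single word ``immediately generalizes'' to chains. Your outline is precisely the Culler--Bardakov mechanism that the citation points to --- polygons glued along a pairing, vertices of the quotient complex in bijection with orbits of $\sigma\pi$, and the count $\chi(\Sigma_\pi)=\orb(\sigma\pi)-|w|/2+k$ --- so the route is the intended one. Of the two steps you flag as delicate, the second (tightening an arbitrary factorization so that the induced cell structure has no vertices beyond the polygon corners, by making the surface map transverse to the edge midpoints of the rose and removing inessential preimage arcs) is exactly the technical content of Culler's proof; it is standard, though not short, and your plan for it is sound.

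The genuine gap is the clause ``if $\Sigma_\pi$ is connected of genus $g$''. For $k\geq 2$ nothing in the definition of $\Pi_w$ forces $\langle\sigma,\pi\rangle$ to act transitively on $\Icl_w$, and if $\Sigma_\pi$ has $c>1$ components your cap-off argument only yields $\cl_F(w)\leq c-\chi(\Sigma_\pi)/2$, strictly weaker than the needed $1-\chi(\Sigma_\pi)/2$. This is not a removable technicality: for the chain $w=[\att,\btt]+[\ctt,\dtt]$ in $F(\att,\btt,\ctt,\dtt)$ every letter occurs exactly once with each sign, so the pairing is unique, $\Sigma_\pi$ is a disjoint union of two tori, $o=2$, and the formula returns $8/4-2/2+0=1$; but $\cl_F(w)=2$ (for $t=1$ the $k=1$ case of the theorem already gives $\cl_F([\att,\btt][\ctt,\dtt])=2$, and it is classical that no conjugator $t$ turns $[\att,\btt]\,t[\ctt,\dtt]t^{-1}$ into a single commutator). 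So the theorem is only correct if the maximum is restricted to pairings for which $\langle \sigma,\pi\rangle$ acts transitively, i.e.\ $\Sigma_\pi$ is connected; for $k=1$ this is automatic, which is the case Bardakov actually treats. Your write-up should add this hypothesis, observe that the lower bound is unaffected (the surfaces produced there are connected by construction), and note that in the paper's sole application --- chains $v+w^{-1}$ with $v,w$ related positive words --- every pairing matches a letter of $v$ with a letter of $w^{-1}$, so the two polygons are glued to one another and connectivity holds for free.
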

Here and throughout the paper, $\orb(\sigma \pi)$ will denote the number of orbits of the permutation $\sigma \pi \col \Icl_w \to \Icl_w$.

We note that Bardakov only proved this for single commutators in a two generator free group. However, his proof immediately generalizes to Theorem~\ref{thm:bardakov}.

Note that, given a chain $w$ and an integer $n \in \N$, we may verify that $\cl_F(w) \leq n$ by providing an appropriate pairing $\pi \in \Pi_w$. Computing the number of orbits can be done in linear time in $|w|$. Thus we may verify $\cl_F(w)$ in polynomial time. Thus we see:

\begin{corr} \label{corr:cl is in np}
For any non-abelian free group $F$, CL-$F$ is in $\NP$, where the input size is measured in the wordlength of a free basis of $F$.
\end{corr}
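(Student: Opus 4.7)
The plan is to exhibit an NP certificate using Bardakov's theorem (Theorem~\ref{thm:bardakov}). Given the input $(\tilde g, n)$ for CL-$F$, the certificate I would propose is a pairing $\pi \in \Pi_w$, where $w$ is a cyclically reduced representative of the conjugacy class of $g$. Since $\Icl_w$ has cardinality $|w|$, a pairing can be encoded as a map $\{1,\ldots,|w|\} \to \{1,\ldots,|w|\}$, which has size polynomial in $|\tilde g|$.

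The verifier would proceed in four steps. First, it cyclically reduces $\tilde g$ to obtain a cyclically reduced word $w$; this can be done in polynomial time, and by Proposition~\ref{prop:formulas for cl} commutator length is invariant under conjugation, so $\cl_F(g) = \cl_F(w)$. Second, it checks that the abelianization of $w$ is trivial (i.e.\ the exponent sum in each generator is zero), which is necessary for $w \in [F,F]$ and takes linear time. Third, it checks that $\pi$ is a valid pairing, that is, that it is a fixed point free involution and that $\xt_{\pi(i,1)} = \xt_{i,1}^{-1}$ for every index; this takes linear time in $|w|$. Finally, it computes $\orb(\sigma\pi)$ by traversing the orbits of the permutation $\sigma \pi$ on $\Icl_w$, again in linear time, and accepts if and only if
$$
\frac{|w|}{4} - \frac{\orb(\sigma\pi)}{2} + \frac{1}{2} \leq n,
$$
which is the Bardakov formula for $k=1$.

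By Theorem~\ref{thm:bardakov}, $\cl_F(w) \leq n$ holds if and only if some pairing $\pi \in \Pi_w$ achieves $\orb(\sigma\pi) \geq \tfrac{|w|}{2} + 1 - 2n$, so the verifier accepts on some certificate precisely when $\cl_F(g) \leq n$. I do not expect any real obstacle here: Bardakov's theorem already does all the work of converting commutator length into a combinatorial maximization problem whose feasibility is verifiable in polynomial time; the only thing to be careful about is the preprocessing step of cyclic reduction and the observation that the abelianization check rules out elements outside $[F,F]$ so that the formula applies.
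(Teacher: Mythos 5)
Your proposal is correct and follows the same route as the paper: the paper's (very brief) justification is exactly that a pairing $\pi \in \Pi_w$ serves as a polynomial-size certificate, that the number of orbits of $\sigma\pi$ is computable in linear time, and that Bardakov's formula then verifies $\cl_F(w) \le n$. Your additional remarks on cyclic reduction and the abelianization check are sensible preprocessing details that the paper leaves implicit.
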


\subsubsection{Fialkovski-Ivanov's algorithm} \label{subsec:fi} 

In 2015, Fialkovski and Ivanov gave an algorithm to compute commutator length and an explicit presentation in terms of commutators, which is based on interchanges of subwords.

\begin{thm} [\protect{\cite[Theorem 5.1]{Ivanov-Fialkovski}}] \label{thm:fi}
Let $F = F(\Acl)$ be a free group and let $w \in {F,F}$ be cyclically reduced. Then there is a presentation of $w$ without cancellations such that
$$
w = w_1 \xtt^{-1} w_2 \ytt^{-1} w_3 \xtt w_4 \ytt w_5
$$
with $\xtt, \ytt \in \Acl^{\pm}$ such that
$$
w = [w_1 w_4 w_3 \xtt w_1^{-1}, w_1 w_4 \ytt w_2^{-1} w_3^{-1} w_4^{-1} w_1^{-1}] w_1 w_4 w_3 w_2 w_5
$$
and
$$
\cl_F(w_1 w_4 w_3 w_2 w_5) = \cl_F(w) - 1.
$$
\end{thm}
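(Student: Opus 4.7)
The plan is to split the theorem into two essentially independent parts: (a) a direct algebraic verification of the commutator identity, valid for any factorization of the stated form, and (b) the combinatorial existence of a factorization for which the resulting commutator length actually drops by one. Part (a) is purely mechanical. Setting $a = w_1 w_4 w_3 \xtt w_1^{-1}$, $b = w_1 w_4 \ytt w_2^{-1} w_3^{-1} w_4^{-1} w_1^{-1}$, and $c = w_1 w_4 w_3 w_2 w_5$, one expands $[a,b]\,c = a b a^{-1} b^{-1} c$ and telescopes the cancellations of the pairs $w_1^{-1} w_1$ and $(w_1 w_4)^{-1}(w_1 w_4)$ that occur at each juncture; the result matches $w_1 \xtt^{-1} w_2 \ytt^{-1} w_3 \xtt w_4 \ytt w_5 = w$. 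From this identity we immediately obtain $\cl_F(w) \leq \cl_F(w_1 w_4 w_3 w_2 w_5) + 1$, i.e., one half of the claimed length equality.

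For part (b), I would work in Bardakov's pairing framework (Theorem~\ref{thm:bardakov}). Let $\pi$ be a pairing of $w$ attaining the maximum $o = \orb(\sigma \pi)$, so $\cl_F(w) = |w|/4 - o/2 + 1/2$. Assuming $\cl_F(w) \geq 1$ (the statement being vacuous otherwise), the pairing $\pi$ must contain a \emph{linked configuration}: positions $p_1 < p_2 < p_3 < p_4$ in cyclic order with $\pi(p_1) = p_3$ and $\pi(p_2) = p_4$. Indeed, any non-crossing pairing on a cycle has an innermost chord between two adjacent positions, and since paired letters are inverse this would violate the cyclic reducedness of $w$; hence no non-crossing pairing exists and the optimal one must have linked pairs. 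Reading the letters at these four positions in the required pattern $\xtt^{-1}, \ytt^{-1}, \xtt, \ytt$ (possibly after replacing $\xtt \leftrightarrow \xtt^{-1}$ or $\ytt \leftrightarrow \ytt^{-1}$, and rotating $w$ cyclically, all of which preserve $\cl_F$) and taking the five intervening subwords as $w_1, \ldots, w_5$ supplies the factorization.

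It remains to show $\cl_F(w') \leq \cl_F(w) - 1$ for $w' := w_1 w_4 w_3 w_2 w_5$. I would construct a pairing $\pi'$ of $w'$ from $\pi$ by deleting the four indices $p_1, \ldots, p_4$ and relabeling the rest of $\Icl_w$ under the block interchange $w_2 \leftrightarrow w_4$. The key claim is that $\orb(\sigma' \pi') \geq o$ on the new index set of size $|w| - 4$; Bardakov's formula then gives
\[
\cl_F(w') \leq \frac{|w|-4}{4} - \frac{o}{2} + \frac{1}{2} = \cl_F(w) - 1,
\]
which combined with part (a) yields the equality. This orbit-count inequality is the main obstacle: morally it is the surface-topology fact that compressing the handle associated to the linked pair drops the genus by one, but combinatorially one must trace the arcs of $\sigma \pi$ that pass through the four deleted positions and verify that, after the block interchange, they recombine into at least as many orbits of $\sigma' \pi'$ as before. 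This bookkeeping — and the careful choice of which linked pair to use when $\pi$ has several — is where the substance of the Fialkovski--Ivanov argument lies.
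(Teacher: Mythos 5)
First, a point of comparison: the paper does not prove this statement at all --- Theorem \ref{thm:fi} is quoted from \cite[Theorem 5.1]{Ivanov-Fialkovski} and used as a black box. So there is no in-paper argument to measure yours against; what follows assesses your reconstruction on its own terms.

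Part (a) is not the mechanical verification you claim. Expanding $[a,b]\,c$ with $a=w_1w_4w_3\xtt w_1^{-1}$, $b=w_1w_4\ytt w_2^{-1}w_3^{-1}w_4^{-1}w_1^{-1}$, $c=w_1w_4w_3w_2w_5$ yields $w_1 w_4 w_3 \xtt w_4 \ytt w_2^{-1} w_3^{-1} w_4^{-1} \xtt^{-1} w_2 \ytt^{-1} w_3 w_2 w_5$, which is not $w$: already in the abelianization the two sides differ by the class of $w_4$. So either the displayed commutator identity is a typo in the statement, or you did not actually carry out the telescoping you describe; in either case ``the result matches $w$'' is false as written. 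What \emph{is} true, and all you need for $\cl_F(w)\le\cl_F(w_1w_4w_3w_2w_5)+1$, is that $w\,(w_1w_4w_3w_2w_5)^{-1}=w_1\bigl(\xtt^{-1}w_2\ytt^{-1}w_3\xtt w_4\ytt w_2^{-1}w_3^{-1}w_4^{-1}\bigr)w_1^{-1}$ is conjugate to a single commutator (one checks via Bardakov's formula that the bracketed word has commutator length $1$); you should prove that instead of asserting the stated identity.

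The more serious issue is part (b). Your innermost-chord argument that every pairing of a cyclically reduced word contains a crossing is fine, and the Bardakov arithmetic $\cl_F(w')\le (|w|-4)/4-o/2+1/2=\cl_F(w)-1$ is the correct target. But the claim $\orb(\sigma'\pi')\ge o$ is simply false for an \emph{arbitrary} linked pair: resolving the wrong crossing can merge orbits and decrease the count, in which case the upper bound on $\cl_F(w')$ is lost. The choice of which crossing to resolve is therefore not ``bookkeeping'' --- it is the entire content of the theorem, and you explicitly leave it open. For the flavour of what is required, compare the argument this paper \emph{does} carry out in the analogous chain setting: Lemma \ref{lemma:special points for alpha} selects $i_0^+$ minimizing $|\alpha^2(i_0^+)-i_0^+|$ precisely so that the resulting configuration is one to which Lemma \ref{lemma:modifying orbits} applies and the orbit count moves the right way. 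Without an analogous extremality argument for the single-word case, part (b) restates the theorem rather than proving it.
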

Observe that in the above theorem, the word $w_1 w_4 w_3 w_2 w_5$ is obtained from the word $w$ by exchanging the subwords $w_2$ and $\xtt w_4 \ytt$ with each other.
Theorem \ref{thm:fi} gives an immediate algorithm to compute commutator length.
This has the following immediate consequence:

\begin{prop}[CL for fixed integer is polynomial] \label{prop:cl for fixed length}
Fix an integer $k \in \N$ and let $F$ be a non-abelian free group.
Then the decision problem that decides if an element $g \in [F,F]$ satisfies $\cl_F(g) \leq k$ is in P.
\end{prop}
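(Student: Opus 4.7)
The plan is to iterate Theorem \ref{thm:fi} up to $k$ times by brute-force enumeration of factorizations. Given the input word $\tilde g$ of length $n$ with $\tilde g \in [F,F]$, I would first cyclically reduce it in linear time to a word $w$. I would then call a recursive procedure $\mathrm{Check}(w, k)$ defined as follows. At the base case $k = 0$, accept iff $w$ is the empty word. For $k \geq 1$, accept if $w$ is empty; otherwise enumerate all quadruples of positions $1 \leq i_1 < i_2 < i_3 < i_4 \leq |w|$ whose letters form the pattern $(\xt^{-1}, \yt^{-1}, \xt, \yt)$ for some $\xt, \yt \in \Acl^{\pm}$. For each such admissible quadruple, let $w_1, \ldots, w_5$ be the five intervening subwords, form $v := w_1 w_4 w_3 w_2 w_5$, cyclically reduce $v$, and recursively call $\mathrm{Check}(v, k-1)$; accept as soon as any branch does.

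Correctness is immediate from Theorem \ref{thm:fi}: if $1 \leq \cl_F(w) \leq k$, that theorem guarantees at least one admissible quadruple of positions for which $\cl_F(v) = \cl_F(w) - 1 \leq k - 1$, so the corresponding branch of the search will accept. Conversely, any accepting run can be unwound, inserting at each level the commutator supplied by Theorem \ref{thm:fi}, to produce an explicit expression of $g$ as a product of at most $k$ commutators.

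For the running time, observe that $|v| \leq |w| - 4$, since the four pivot letters are deleted, so word length strictly decreases along any branch. Letting $T(n, k)$ bound the total work on an input of length $n$, the $O(n^4)$ choices of pivot quadruple per level (with $O(n)$ bookkeeping each to build and cyclically reduce $v$) give the recurrence $T(n, k) \leq c \cdot n^4 \cdot T(n, k-1) + O(n^5)$, hence $T(n, k) = O(n^{4k + O(1)})$, which is polynomial in $n$ for each fixed $k$.

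The only substantive ingredient is Theorem \ref{thm:fi}, which reduces the problem to a bounded-depth search. The main obstacle in this approach is not mathematical but complexity-theoretic: the exponent $4k$ blows up when $k$ is part of the input, so the method does not extend to variable $k$, as is to be expected in light of Theorem \ref{theorem:cl np complete}.
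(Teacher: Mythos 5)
Your proposal is correct and follows exactly the route the paper intends: the paper's proof is simply ``Immediate from the previous theorem'' (Theorem \ref{thm:fi}), and the remark following it records the same crude $\Ocl(|g|^{4k})$ bound you derive by brute-force enumeration of the pivot quadruples. You have merely spelled out the details that the paper leaves implicit.
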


\begin{proof}
Immediate from the previous theorem. 
\end{proof}
Note however, that a crude estimation of the running time is of the order $\Ocl(|g|^{4 k} )$. Thus this does not yield a polynomial time algorithm to compute commutator length for general elements.

\subsection{Computing commutator length on chains is NP complete} \label{subsec:cl on chains np complete}

Proposition \ref{prop:cl for fixed length} showed that for a non-abelian free group $G$ the decision problem which determines if $\cl_G(g) \leq k$ for a fixed integer $k$ is in $P$.
In this section we show that this is not the case for chains.
This restates a result of 
Kharlampovich, Lys\"{e}nok, Myasnikov and Touikan \cite{SolvProblemFreeGroup}:

\begin{thm} [\cite{SolvProblemFreeGroup}] \label{thm:cl chains np compl}
Let $F$ be a non-abelian free group. Then the decision problem which decides if a chain $c \in B_1(F)$ satisfies $\cl_F(c)=0$ is NP-complete.
\end{thm}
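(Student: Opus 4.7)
My plan is to prove membership in NP and NP-hardness separately. For membership, observe that $\cl_F(c) = 0$ for a chain $c = w_1 + \cdots + w_k$ means that there exist conjugators $t_1, \ldots, t_k \in F$ with $t_1 w_1 t_1^{-1} \cdots t_k w_k t_k^{-1} = 1$ in $F$. By Bardakov's formula extended to chains (Theorem \ref{thm:bardakov}), this is equivalent to the existence of a pairing $\pi \in \Pi_c$ with $\orb(\sigma\pi) = |c|/2 + 2 - k$. Such a pairing is a polynomial-size certificate: $\pi$ itself has size $O(|c|)$, and verifying that it is a fixed-point-free inverse-matching involution and computing the orbit count of $\sigma\pi$ both run in polynomial time in $|c|$. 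Hence the problem lies in NP.

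For NP-hardness, the condition $\cl_F(c)=0$ is equivalent to solvability of the quadratic spherical equation $\prod_{i=1}^{k} t_i w_i t_i^{-1} = 1$ over $F$, a well-studied family of Diophantine equations in free groups. I would reduce a known strongly NP-complete combinatorial problem, such as \textsc{3-Partition} (a natural candidate in view of the reduction used later for Theorem \ref{theorem:cbi np complete}) or \textsc{3-SAT}, to this spherical equation problem. Given an instance of \textsc{3-Partition} with numbers $a_1, \ldots, a_{3m}$ summing to $mB$, one encodes each $a_i$ as a word $u_i$ in $F$ whose letter content records $a_i$ copies of some distinguished letter, and equips each $u_i$ with distinct ``marker'' subwords built from fresh auxiliary letters. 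One then builds bin words $v_1, \ldots, v_m$ whose markers force each $u_i$ to be absorbed into exactly one bin, and designs the markers so that a legitimate partition into triples of total weight $B$ corresponds bijectively to pairings $\pi$ on the index set of the chain $c = u_1 + \cdots + u_{3m} + v_1 + \cdots + v_m$ achieving the required orbit count.

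The main obstacle is rigidity: the conjugators $t_i$ permit arbitrary cyclic rotation of each $w_i$ and wide latitude in how letters of different factors pair, so the encoding must be engineered so that only the intended combinatorial pairings are realisable. The standard device is to pad the encoding with pairwise-distinct, sufficiently long marker blocks of fresh letters; cancellation of such a block can only occur against its unique inverse copy, which pins down the global pairing up to the desired combinatorial structure. Completeness of the reduction, that a yes-instance yields a pairing hitting the target orbit count, follows directly from the construction. Soundness, that a no-instance forces $\max_\pi \orb(\sigma\pi) < |c|/2 + 2 - k$, is the delicate step: one must rule out all admissible pairings via a careful combinatorial argument using Bardakov's formula, and this is where the detailed construction of \cite{SolvProblemFreeGroup} does the heavy lifting.
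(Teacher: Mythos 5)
Your NP-membership argument is correct and coincides with the paper's: by Theorem \ref{thm:bardakov} a pairing $\pi \in \Pi_c$ achieving the maximal orbit count is a polynomial-size certificate for $\cl_F(c)=0$, verifiable in polynomial time.

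The hardness half, however, has a genuine gap: you describe a \emph{template} for a reduction rather than a reduction. You never write down the chain $c$, and you explicitly defer the soundness direction --- the only nontrivial step --- to ``the detailed construction of \cite{SolvProblemFreeGroup}''. As written, nothing is established. The paper's route is both concrete and considerably simpler than your marker-block scheme: it reduces the \emph{exact bin packing} problem by forming the chain $c = [\att, \btt^{n_1}] + \cdots + [\att, \btt^{n_k}] + [\att^N, \btt^{B}]^{-1}$ in $F(\att,\btt)$ --- no fresh auxiliary letters at all --- and invokes \cite[Theorem 3.11]{SolvProblemFreeGroup}, which states precisely that this chain has commutator length $0$ if and only if the bin packing instance is solvable. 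The rigidity you worry about (ruling out unintended pairings when conjugators allow arbitrary rotations) is exactly the content of that cited theorem, and your proposed remedy of padding with long distinct marker blocks is neither carried out nor obviously sufficient to pin down the orbit structure in Bardakov's formula; in particular, you give no argument that a no-instance forces $\max_\pi \orb(\sigma\pi) < |c|/2 + 2 - k$. One point you do get right and that is essential to either reduction: the numbers enter the chain in unary (as exponents $\btt^{n_i}$), so the source problem must be \emph{strongly} NP-complete, which both $3$-PARTITION and exact bin packing are.
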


\begin{proof}
We outline the strategy descirbed in \cite{SolvProblemFreeGroup}.
By Bardakov's algorithm it is clear that this problem is in NP.

To show that it is NP-hard, we will reduce the \emph{exact bin packing problem} to it:

\begin{defn}[EBP]
Given a $k$-tuple $(n_1, \ldots, n_k)$ and positive integers $B,N$ which are uniformly polynomial in $k$. 
Then the decision problem which decides if there is a partition of $\{ 1, \ldots, k \}$ into $N$ subsets
$$
\{ 1, \ldots, k \} = S_1 \sqcup \cdots \sqcup S_N
$$
such that for each $i = 1, \ldots, N$ we have
$$
\sum_{j \in S_j} n_j = B
$$
is called \emph{exact bin packing} and denoted by EBP.  The input size is $\left( \sum_{i=1}^k n_i \right) + N + B$. As $N$ and $B$ were polynomial in $k$ we see that the whole input size is polynomial in $k$.
\end{defn}
This problem is known to be NP-complete \cite[p. 226]{comp_intrac}.
Given an instance $(n_1, \ldots, n_k)$, $N$ and $B$ of EBP, define the chain $c \in B_1(F(\{ \att, \btt \}))$ as
$$
c = [\att, \btt^{n_1}] + \cdots [\att, \btt^{n_k}] + [\att^N, \btt^{B}]^{-1}.
$$
By \cite[Theorem 3.11]{SolvProblemFreeGroup}
we have that there is a solution to this instance of EBP if and only if $\cl_{F(\{ \att, \btt \})}(c) = 0$. Thus we may reduce EBP to the problem of computing the commutator length of chains and thus this problem is NP-hard.
\end{proof}

\section{CBI and CL} \label{sec:cbi cl}

The aim of this section is to prove Theorem \ref{theorem: cl may be used to compute cl}.
We first recall cyclic block interchange distance $\dcbi$ and highlight its connection to other fields in Section \ref{subsec:cyclic block interchange distance}.
In Section \ref{subsec:cbi as cl} we prove part (i) of Theorem \ref{theorem: cl may be used to compute cl}, which shows how commutator length can be used to compute $\dcbi$.
In Section \ref{subsec: poly redcution of cbi to cl} we  prove part (ii) of Theorem \ref{theorem: cl may be used to compute cl}, which shows that for any alphabet $\Acl$, CBI-$\Acl$ may be reduced to CL-$F(\Acl)$ in polynomial time.

\subsection{Cyclic block interchange distance} \label{subsec:cyclic block interchange distance}
Let $\Acl$ be an alphabet and let $\Acl^+$ be the set of positive words in $\Acl$, i.e.\ words in $\Acl$ without inverses.
For two words $v, w \in \Acl^+$ we say that $w$ is a \emph{cyclic block interchange} of $v$ if there are cyclic permutations $v'$ (resp. $w'$) of $v$ (res of $w$) such that there are words $w_1, w_2, w_3, w_4 \in \Acl^+ \cup \{ \emptyset \}$ with 
\begin{eqnarray*}
v' &=& w_1 w_2 w_3 w_4, \mbox{ and}\\
w' &=& w_1 w_4 w_3 w_2.
\end{eqnarray*}
There is a more geometric way to think about cyclic block interchange. If $S_v$ and $S_w$ are circles labeled by the words $v$ and $w$ then $w$ is a cyclic block interchange of $v$ if one may obtain $S_w$ by exchanging two subsegments of $S_v$.

We say that two words $v,w \in \Acl^+$ are \emph{related}, if both $v$ and $w$ contain the same number of each letter of $\Acl$. It is easy to see that for two related words $v$ and $w$ there is a sequence of cyclic block interchanges $z^0, \ldots, z^k$ such that $z^0 = v$, $z^k = w$ and such that each $z^i$ is a cyclic block interchange of $z^{i-1}$.
For two related words $v, w$ as above we set $\dcbi(v,w)=0$ if $w$ is a cyclic permutation of $v$. Else, $\dcbi(v,w)$ is the smallest $k$ such that there is a sequence $z^0, \ldots, z^k$ as above.

\begin{exmp} \label{exmp:cbi}
In the alphabet $\Acl = \{ \att, \btt \}$ consider the two related words 
$v = \att \btt \att \btt \att \btt$ and $w = \att \att \att \btt \btt \btt$.
We see that $w$ is obtained from a cyclic block interchange: Indeed, for $v' = \btt \att \btt \att \btt \att$ and $w' = \btt \att \att \att \btt \btt$ note that $v'$ (resp. $w'$) is a cyclic permutation of $v$ (resp. $w$) and that $v' = w_1 w_2 w_3 w_4$ and 
$w' = w_1 w_4 w_3 w_2$ 
 with $w_1 = \btt \att$, $w_2 = \btt$, $w_3 = \att \btt$, $w_4 = \att$.
Thus, as $v$ and $w$ are not cyclic permutations of each other, we conclude that $\dcbi(v,w)=1$.
More generally we may define 
$v_n = (\att \btt)^n$ and $w_n = \att^n \btt^n$. Then, using Theorem \ref{theorem: cl may be used to compute cl} and Example \ref{exmp:scl} we see that $\dcbi(v_n,w_n) = \lfloor \frac{n}{2} \rfloor$
\end{exmp}

Several variations of cyclic block interchange have been studied in the literature:

\begin{rmk}[Variants of cyclic block interchange] \label{rmk:variants of cyclic block interchange}
The block interchange distance is a variation of \emph{transposition distance} studied in \cite{rev_transp}, which are interchanges of the form $w_1 w_2 w_3 w_4 \mapsto w_1 w_3 w_2 w_4$, and some of our arguments in Section \ref{sec:cbi} are an adaptation of the methods of \cite{rev_transp}.
It is also a variation of \emph{block interchange distance}, which was studied in \cite{Christie}, which are interchanges of the form $w_1 w_2 w_3 w_4 w_5 \mapsto w_1 w_4 w_3 w_2 w_5$.
There are also several other versions of this problem, such as assuming that the words contain only distinct letters, interchanging only letters, reversing the order of the subwords, inserting defects, etc. See \cite{CGR} for a comprehensive survey on this topic.
\end{rmk}

\begin{rmk}[DNA of bacteria] \label{rmk:dna of bacteria}
Those theoretical considerations have an application in molecular biology:
A DNA is a double stranded molecule composed of a sequence of simpler monomeric units called nucleotides. There are four types of nucleotides, typically denoted by letters $\Acl = \{ G, T, A, C \}$.
Thus every DNA is a positive word in the alphabet $\Acl$. Note that in bacteria, the DNA is circular and encodes several genes.

 Subwords of DNA with a biological function are called \emph{genes}. 
Certain genes, called \emph{transposons} have the property that they might change their position within the DNA and switch it with others.
Consider one model for a permutation in which two such substrands of DNA are exchanged.
Let $v$ and $w$ be two (circular) bacterial DNA strands, where $v$ is obtained from $w$ by a sequence of transpositions.
Then $\dcbi(v,w)$ measures the least number of such permutations.

However, the results of this paper shows that this quantity is inherently hard to compute.
\end{rmk}

\subsection{$\dcbi$ via $\cl$} \label{subsec:cbi as cl}

The aim of this section is to show the part $(i)$ of Theorem \ref{theorem: cl may be used to compute cl}:

\begin{manualtheorem}{ \ref{theorem: cl may be used to compute cl} (i)}
If $v, w \in \Acl^+$ are related, then
$$
\dcbi(v,w) = \cl_{F(\Acl)}(v + w^{-1}).
$$
\end{manualtheorem}

For the rest of this section, we fix two related words $v,w \in \Acl^+$ and suppose that $|v|=|w|=n$ for some $n \in \N$.
We write
\begin{eqnarray*}
v &=& \xt_{0^+} \cdots \xt_{(n-1)^+} \mbox{, and} \\
w &=& \xt_{0^-} \cdots \xt_{(n-1)^-}.
\end{eqnarray*}
We write the index set of  of Bardakov (see Section \ref{subsec:bardakov}) in the following way:
\begin{rmk} \label{rmk:index set bardakov}
We set 
$\Icl_{v+w^{-1}} = \Icl^+ \cup \Icl^-$ with $\Icl^\pm = \{ i^\pm; i \in \Z_n \}$, for $\Z_n$ the cyclic group with $n$ elements.
For $x_1, x_2 \in \Icl^+$ with $x_1 = i_1^+$, $x_2 = i_2^+$ we write $x_1 + x_2 = (i_1 + i_2)^+$ where $i_1 + i_2$ is addition in $\Z_n$. Similarly, $x_1 - x_2 = (i_1 - i_2)^+$. Analogously, we may add and subtract two elements of $\Icl^-$.
If $x \in \Icl^+$ with $x = i^+$ then by $x + 1$ we mean the element $(i + 1)^+ \in \Icl^+$. We set $|x|=i$, where $i$ is the integer $0 \leq i \leq n-1$ such that $x = i^+$.

Note that in particular, for two elements $x, y \in \Icl^+$, we do not generally have that $|x-y| = |y - x|$.
\end{rmk}

Every pairing $\pi \col \Icl_{v + w^{-1}} \to \Icl_{v + w^{-1}}$ of $v + w^{-1}$ in the sense of Bardakov has to match up any letter with its inverse. Since $v$ contains only letters of $\Acl$ and $w^{-1}$ contains only inverse letters of $\Acl$ we see that $\pi(\Icl^-) = \Icl^+$ and that $\xt_{\pi(i^+)} = \xt_{i^+}$ for any $i \in \Icl^+$. 
We denote the set of pairings by $\Pi_{v+w^{-1}}$.
As in Section \ref{subsec:bardakov} we define $\sigma \col \Icl_{v + w^{-1}} \to \Icl_{v + w^{-1}}$ via
$$
\sigma(x) = \begin{cases}
x + 1  & \text{if } x \in \Icl^+ \\
x - 1  & \text{if } x \in \Icl^-.
\end{cases}
$$
See Remark \ref{rmk:index set bardakov} for the notation involved in this statement. Bardakov's formula reduces to
$$
\cl_{F(\Acl)}(v + w^{-1}) = \min_{\pi \in \Pi_{v+w^{-1}}} \Big{ \{ } \frac{n}{2} - \frac{\orb(\sigma \pi)}{2} \Big{ \} } 
$$
as $|v + w^{-1}| = |v| + |w| = 2n$.

\subsubsection{Notation} \label{subsubsec:notation}

We call a sequence $(x_1, x_2, \ldots, x_k)$ of elements $x_j \in \Icl^+$ \emph{cyclically ordered} if it is possible to find representatives $i_1, \ldots, i_k \in \Z$ of $x_1, \ldots, x_k$ and an integer $N \in \N$ such that 
$$
N \leq i_1 <  \cdots <  i_k < N + n.
$$
See \ref{rmk:index set bardakov} for the notations involved in this statement.
We say that $y \in \Icl^+$ \emph{lies cyclically between $x_1, x_2 \in \Icl^+$} if $(x_1, y, x_2)$ is cyclically ordered.

For a bijection $\alpha \col \Icl_{v+w^{-1}} \to \Icl_{v+w^{-1}}$ and an element $x \in \Icl_{v+w^{-1}}$ let $o(x)$ be the size of the orbit of $x$ under $\alpha$ and let $\Ocl_\alpha(x)$ be the ordered sequence
$$
\Ocl_\alpha(x) = \left( x, \alpha(x), \ldots, \alpha^{o(x)-1}(x) \right).
$$
If $x \not= y \in \Icl_{v+w^{-1}}$ lies in the orbit of $x$ under $\alpha$ then let $o(x,y)$ be the least positive integer $o \in \N^+$ such that $\alpha^o(x) = y$ and let $\Ocl_\alpha(x,y)$ be the ordered sequence
$$
\Ocl_\alpha(x,y) = \left( x, \alpha(x), \ldots, \alpha^{o(x,y)-1 }(x) \right).
$$
The concatenation between two such sequences is denoted by $"\cdot"$. For example, it is easy to see that if $x \not = y$ is in the orbit of $x$ under $\alpha$ then $\Ocl_\alpha(x) = \Ocl_\alpha(x,y) \cdot \Ocl_\alpha(y,x)$.

Let $y_1,y_2$ be both in the orbit of $x$ under $\alpha$ such that all of $x,y_1,y_2$ are distinct. Then observe that
\[
\Ocl_\alpha(x) = \Ocl_\alpha(x, y_1) \cdot \Ocl_\alpha(y_1, y_2) \cdot \Ocl_\alpha(y_2, x)
\]
if and only if there are integers $0 < i_1 < i_2 < o(x)$ such that $y_1 = \alpha^{i_1}(x)$ and $y_2 = \alpha^{i_2}(x)$.

\subsubsection{Orbits of $\sigma \pi$} \label{subsec:orbits of sigma pi}

Let $\pi \in \Pi_{v+w^{-1}}$ be a pairing  and let $\sigma \col \Icl_{v+w^{-1}} \to \Icl_{v+w^{-1}}$ be as above.
We analyze which type of orbits arise for $\alpha = \sigma \pi$.

\begin{lemma}[orbits of $\alpha$] \label{lemma:special points for alpha}
Let $\pi \in \Pi_{v+w^{-1}}$ be a pairing and let $\alpha = \sigma \pi$. Then either
\begin{enumerate}
\item $\alpha^2(x) = x$ for all $x \in \Icl_{v+w^{-1}}$, or
\item there are elements $i_0^+, i_1^+, i_2^+ \in \Icl^+$ such that $(i_0^+, i_1^+, i_2^+)$ is cyclically ordered and such that
$$
\Ocl_\alpha(i_0^+) = \Ocl_\alpha(i_0^+, i_2^+) \cdot \Ocl_\alpha(i_2^+, i_1^+) \cdot \Ocl_\alpha(i_1^+, i_0^+) \mbox{, or}
$$
\item there are elements $i_0^+, i_1^+, i_2^+, i_3^+ \in \Icl^+$ such that $(i_0^+, i_1^+, i_2^+, i_3^+)$ is cyclically ordered such that $\Ocl_\alpha(i_0^+)$ and $\Ocl_\alpha(i_1^+)$ are distinct orbits with
\begin{align*}
\Ocl_\alpha(i_0^+) &= \Ocl_\alpha(i_0^+, i_2^+) \cdot \Ocl_\alpha(i_2^+, i_0^+) \mbox{, and}\\
\Ocl_\alpha(i_1^+) &= \Ocl_\alpha(i_1^+, i_3^+) \cdot \Ocl_\alpha(i_3^+, i_1^+).
\end{align*}
\end{enumerate}
\end{lemma}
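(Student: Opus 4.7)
Approach: I would reformulate the three alternatives in terms of the permutation $\beta := \alpha^2|_{\Icl^+}$ of $\Icl^+ \cong \Z_n$, and rule out the residual non-crossing scenario by a global cycle count. Since $\pi$ exchanges $\Icl^+$ and $\Icl^-$, so does $\alpha$; hence $\beta$ is a permutation of $\Icl^+$. Writing $\pi(i^+) = p(i)^-$ for a bijection $p \colon \Z_n \to \Z_n$, a short computation gives $\beta(i) = p^{-1}(p(i) - 1) + 1$; setting $\gamma := \sigma^{-1}\beta$, one obtains $\gamma(i) = p^{-1}(p(i) - 1) = (p^{-1} \circ \tau_{-1} \circ p)(i)$ where $\tau_{-1} \colon j \mapsto j - 1$. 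So $\gamma$ is conjugate to $\tau_{-1}$ and hence a single $n$-cycle on $\Z_n$; I will use this at the very end. Case (1) is precisely $\beta = \mathrm{id}$, so from now on I assume $\beta \neq \mathrm{id}$.

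Pick a $\beta$-orbit $O$ of size $m \geq 2$ and enumerate it in orbit order as $a_0, a_1, \dots, a_{m-1}$ with $\beta(a_j) = a_{j+1 \bmod m}$. If this orbit order disagrees with the cyclic order of $a_0, \dots, a_{m-1}$ on $\Z_n$, I pick indices $r < s < t$ for which $(a_r, a_t, a_s)$ is cyclically ordered on $\Z_n$; because the $+$-elements encountered along the $\alpha$-orbit through $a_r$ appear in the order $a_r, a_{r+1}, \dots, a_{r-1}$, the triple $(i_0^+, i_1^+, i_2^+) := (a_r, a_t, a_s)$ verifies case (2). Otherwise, within every non-trivial $\beta$-orbit the orbit order matches the circle order. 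A short check rules out $\beta(i) = i + 1$ (it would force $p(i) = p(i) - 1$), so each arc $A_j := \{a_j + 1, \dots, a_{j+1} - 1\}$ between consecutive elements of $O$ is non-empty. If some orbit $O' \neq O$ meets two distinct arcs $A_j$ and $A_{j'}$ at points $c$ and $c'$, then $(a_j, c, a_{j'}, c')$ is cyclically ordered with $a_j, a_{j'} \in O$ and $c, c' \in O'$, verifying case (3).

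The remaining possibility is that no pair of orbits crosses and no orbit self-crosses, so the partition $P$ of $\Z_n$ into $\beta$-orbits is non-crossing and each block is rotated in its own circle order. The standard identity for non-crossing partitions (the Kreweras complement) then asserts that $\gamma = \sigma^{-1}\beta$ has exactly $n + 1 - |P|$ cycles; combined with $\gamma$ being a single $n$-cycle, this forces $|P| = n$ and hence $\beta = \mathrm{id}$, contradicting $|O| \geq 2$. The main obstacle, in my view, is making this final counting step fully self-contained: the cleanest presentation quotes the Kreweras complement, but a more hands-on alternative inducts on $n$, stripping an innermost singleton block adjacent to a non-trivial one and tracking how the cycle structure of $\gamma$ changes under this reduction.
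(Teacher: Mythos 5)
Your proof is correct, and it takes a genuinely different route from the paper's. The paper argues locally and extremally: it chooses $i_0^+$ minimizing the displacement $|\alpha^2(i_0^+)-i_0^+|=M$ among the non-fixed points of $\alpha^2$, shows there is another non-fixed point $(i_0+K)^+$ strictly inside that interval, and uses minimality of $M$ to force $\alpha^2((i_0+K)^+)$ to land outside the interval, which directly yields the configuration of case (2) or (3) according to whether $(i_0+K)^+$ lies in the orbit of $i_0^+$. You instead argue globally: after checking $\beta:=\alpha^2|_{\Icl^+}$ satisfies $\sigma^{-1}\beta=p^{-1}\tau_{-1}p$ (hence is a single $n$-cycle on $\Icl^+$), you correctly reinterpret case (2) as ``some $\beta$-orbit is traversed out of cyclic order'' and case (3) as ``two $\beta$-orbits cross,'' and you eliminate the residual possibility --- a non-crossing orbit partition with each block rotated coherently with $\sigma$ --- via the identity $\orb(\beta)+\orb(\sigma^{-1}\beta)=n+1$, which forces $\beta=\mathrm{id}$ and lands in case (1). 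Both arguments are sound; the paper's is elementary and self-contained but index-heavy, whereas yours is conceptually cleaner and explains \emph{why} a nontrivial pairing must produce a crossing (it is exactly the genus-zero/Kreweras-complement count), at the cost of importing a nontrivial combinatorial fact that you would need to quote (Biane's characterization of non-crossing partitions as the interval below the long cycle) or reprove by the induction you sketch. Two minor presentational points: the crossing detection should be stated as applied to \emph{every} non-trivial orbit before concluding that the whole partition is non-crossing, and for $m=2$ an orbit can never self-cross, so such orbits go directly to the crossing-pairs or non-crossing analysis --- both of which your argument handles, but which are worth making explicit.
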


\begin{proof}
If $\alpha^2(x) = x$ for all $x \in \Icl_{v+w^{-1}}$ we are done.
Else observe that $\alpha(\Icl^+) = \Icl^-$ and $\alpha(\Icl^-)=\Icl^+$ and thus $\alpha^2(\Icl^+)=\Icl^+$. We see that if 
$\alpha^2(x) \not = x$ for some $x \in \Icl_{v+w^{-1}}$ then there is some $x \in \Icl^+$ such that $\alpha^2(x) \not = x$.

Choose $i_0^+ \in \Icl^+$ with the property that $\alpha^2(i_0^+) \not = i_0^+$ and 
such that $i_0^+$ satisfies
\[
|\alpha^2(i_0^+)- i_0^+| = \min \{ | \alpha^2(x) - x | \mid \alpha^2(x) \not = x, x \in \Icl^+ \}  = M,
\]
thus $\alpha^2(i_0^+) = (i_0 + M)^+$ and $M < n$. 
See Remark \ref{rmk:index set bardakov} for the notation.

\begin{claim}
Let $K>0$ be the smallest integer such that $\alpha^2(i_0^+ + K) \not = i_0^+ + K$. Then $K < M-1$.
\end{claim}

\begin{proof}

Suppose that $\alpha^2(i_0^+ +k) = i_0^+ + k$ for all $0 \leq k \leq M-1$. 

Note that $M > 1$. Else we have that $\alpha^2(i_0^+) =  i_0^+ + 1$. Thus
\begin{eqnarray*}
\sigma \pi \sigma \pi(i_0^+) &=& i_0^+ + 1 \\
\pi(\sigma(\pi(i_0^+)) &=& i_0^+ \\
\pi( \pi(i_0^+ -1) &=& i_0^+ \\
\pi(i_0^+ - 1) &=& \pi(i_0^+) \\
i_0^+ - 1 &=& i_0^+
\end{eqnarray*}
which is a contradiction.

Thus suppose that $M > 1$.
Then note that if $\alpha^2(i_0^+ +k) = i_0^+ +k$, then
\begin{eqnarray*}
\sigma \pi \sigma \pi(i_0^+ + k) &=& i_0^+ + k \\
\pi(\sigma(\pi(i_0^+ + k))) &=& i_0^+ + (k-1)\\
\sigma(\pi(i_0^+ + k)) &=& \pi(i_0^+ + (k-1)) \\
\pi(i_0^+ + k) &=& \pi(i_0^+ + (k-1)) + 1. \\
\end{eqnarray*}
Inductively we see that for every $0 \leq k \leq M-1$ we have that
$\pi(i_0^+ + k) = \pi(i_0^+) + k$
and in particular 
$$
\pi(i_0^+ + (M-1)) = \pi(i_0^+) + (M-1).
$$
Also we have that
\begin{eqnarray*}
\sigma \pi \sigma \pi(i_0^+) &=& i_0^+ + M \\
\pi(\pi(i_0^+)-1) &=& i_0^+ + (M-1) \\
\pi(i_0^+)-1 &=& \pi(i_0^+ + (M-1),
\end{eqnarray*}
and, by comparing the terms we see that
$$
\pi(i_0^+)-1 = \pi(i_0^+ + (M-1) = \pi(i_0^+) + (M-1)
$$
and hence $M = n$, which is a contradiction. 
\end{proof}

By minimality of $M$ we see that $\alpha^2((i_0+K)^+)$ does not lie cyclically between $i_0^+$ and $\alpha^2(i_0^+)$. There are two cases:
\begin{enumerate}
\item $(i_0+K)^+$ belongs to the orbit of $i_0^+$ under $\alpha$. Then we set
\[
i_2^+ = (i_2+M)^+ \text{ and } i_1^+ = (i_0 + K)^+
\]
then we see that $(i_0^+, i_1^+, i_2^+)$ are cyclically ordered and moreover,
\[
\Ocl_\alpha(i_0^+) = \Ocl_\alpha(i_0^+, i_2^+) \cdot \Ocl_\alpha(i_2^+, i_1^+) \cdot \Ocl_\alpha(i_1^+, i_0^+)
\]
\item $(i_0+K)^+$ does not belong to the orbit of $i_0^+$ under $\alpha$. Then set
\[
i_2^+ = (i_2+M)^+ \text{, } i_1^+ = (i_0 + K)^+ \text{, and } i_3^+ = \alpha(i_1^+)
\]
then again $(i_0^+, i_1^+, i_2^+, i_3^+)$ are cyclically ordered, $\Ocl_\alpha(i_0^+)$ and $\Ocl_\alpha(i_1^+)$ are distinct orbits and
\begin{align*}
\Ocl_\alpha(i_0^+) &= \Ocl_\alpha(i_0^+, i_2^+) \cdot \Ocl_\alpha(i_2^+, i_0^+) \mbox{, and} \\
\Ocl_\alpha(i_1^+) &= \Ocl_\alpha(i_1^+, i_3^+) \cdot \Ocl_\alpha(i_3^+, i_1^+).
\end{align*}
\end{enumerate}
This completes the proof of Lemma \ref{lemma:special points for alpha}.
\end{proof}

\subsubsection{Modifying the orbits by a cyclic block interchange} \label{subsec:modifying the orbits}

 We define two types of maps $\gamma_{i_1, i_2}, \gamma_{i_1,i_2,i_3} \col \Icl_{v+w^{-1}} \to \Icl_{v + w^{-1}}$ which are permutations of the index set $\Icl_{v + w^{-1}}$. These maps will correspond to cyclic block interchanges.
\begin{enumerate}
\item Let $i_1,i_2$ be integers with  $0<i_1<i_2 \leq n-1$.
Then $\gamma_{i_1,i_2} \col \Icl_{v+w^{-1}} \to \Icl_{v+w^{-1}}$ is defined as follows. If $x^- \in \Icl^-$, set $\gamma(x^-) = x^-$. For $x^+ \in \Icl^+$
set 
\[
\gamma_{i_1,i_2}(x^+) = \begin{cases}
x^+ & 					\text{if } 0^+ \leq x^+ < i_1^+ \\
x^+ + i_2 - i_1  &		\text{if } i_1^+ \leq x^+ < i_2^+ \\
x^+ - i_2 + i_1  &		\text{if } i_2^+ \leq x^+ \leq (n-1)^+.
\end{cases}
\]

\item Let $i_1,i_2,i_3$ be integers such that $0 < i_1 < i_2 < i_3 \leq n-1$.
Then $\gamma_{i_1,i_2,i_3} \col \Icl_{v+w^{-1}} \to \Icl_{v+w^{-1}}$ is defined as follows. If $x^- \in \Icl^-$ set $\gamma_{i_1,i_2,i_3}(x^-) = x^-$. For $x^+ \in \Icl^+$ set 
\[
\gamma_{i_1,i_2,i_3}(x^+) = \begin{cases}
x^+ & 					\text{if } 0^+ \leq x^+ < i_1^+ \\
(x^+ + n - i_2  &		\text{if } i_1^+ \leq x^+ < i_2^+ \\
x^+ - i_2 + n-i_3 + i_1 & \text{ if } i_2^+ \leq x^+ < i_3^+ \\
x^+ - i_3 + i_1  &		\text{if } i_3^+ \leq x^+ \leq (n-1)^+.
\end{cases}
\]
\end{enumerate}
We see that if $v = \xtt_{0^+} \cdots \xtt_{{(n-1)}^+}$, then $\gamma_{i_1,i_2}$ corresponds to the block interchange from $v_1 v_2 v_3$ to $v_1 v_3 v_2$ where $v_1 = \xtt_{0^+} \cdots \xtt_{(i_1-1)^+}$, $v_2 = \xtt_{i_1^+} \cdots \xtt_{(i_2-1)^+}$ and $v_3 = \xtt_{i_2^+} \cdots \xtt_{(n-1)^+}$.
Similarly, $\gamma_{i_1,i_2,i_3}$ corresponds to the block interchange from $v = v_1 v_2 v_3 v_4$ to $v_1 v_4 v_3 v_2$ where  $v_1 = \xtt_{0^+} \cdots \xtt_{(i_1-1)^+}$, $v_2 = \xtt_{i_1^+} \cdots \xtt_{(i_2-1)^+}$, 
$v_3 = \xtt_{i_2^+} \cdots \xtt_{(i_3-1)^+}$
 and $v_4 = \xtt_{i_3^+} \cdots \xtt_{(n-1)^+}$.

We observe that if $\gamma$ is one of the above, then if $\pi$ is a pairing, then $\gamma^{-1} \pi \gamma$ is a pairing for $v + w^{-1}$ after applying the corresponding block interchange to $v$.
We will write $\alpha_\gamma = \sigma \gamma^{-1} \pi \gamma$ and note that this is the function $\alpha$ obtained after the corresponding block interchange.

\begin{lemma}[Modifying the orbits by a block interchange] \label{lemma:modifying orbits}
We have the following:
\begin{enumerate}
\item \label{prop_case:orbits_case1} If there are $i_1^+, i_2^+$ with $(0^+, i_1^+, i_2^+)$ cyclically ordered, such that 
\[
\Ocl_\alpha(0^+) = \Ocl_\alpha(0^+, i_2^+) \cdot \Ocl_\alpha(i_2^+, i_1^+) \cdot \Ocl_\alpha(i_1^+, 0^+).
\] 
Then we have that $\orb(\alpha_{\gamma_{i_1,i_2}}) = \orb(\alpha) + 2$.
\item \label{prop_case:orbits_case2} If there are $i_1^+, i_2^+, i_3^+$ with $(0^+, i_1^+, i_2^+, i_3^+)$ cyclically ordered, such that $\Ocl_\alpha(0^+)$ and $\Ocl_\alpha(i_1^+)$ are distinct orbits with
\begin{align*}
\Ocl_\alpha(0^+) &= \Ocl_\alpha(0^+, i_2^+) \cdot \Ocl_\alpha(i_2^+, 0^+) \mbox{ and} \\
\Ocl_\alpha(i_1^+) &= \Ocl_\alpha(i_1^+, i_3^+) \cdot \Ocl_\alpha(i_3^+, i_1^+).
\end{align*}
Then we have that $\orb(\alpha_{\gamma_{i_1,i_2,i_3}}) = \orb(\alpha) + 2$
\end{enumerate}
\end{lemma}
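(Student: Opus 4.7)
The plan is to exploit conjugation-invariance of orbit counts to reduce the computation to a small perturbation of $\alpha$. Since conjugation preserves cycle structure,
\[
\orb(\alpha_\gamma) \;=\; \orb\bigl(\gamma\,\alpha_\gamma\,\gamma^{-1}\bigr) \;=\; \orb(\tilde{\sigma}\,\pi),
\]
where $\tilde{\sigma} := \gamma\sigma\gamma^{-1}$. Since $\gamma$ is the identity on $\Icl^-$ and $\sigma$ preserves the partition $\Icl^+ \sqcup \Icl^-$, the map $\tilde{\sigma}$ agrees with $\sigma$ on $\Icl^-$. Moreover, at any point $x^+$ lying in the interior of a block of $\gamma$ one has $\tilde{\sigma}(x^+) = \gamma(\gamma^{-1}(x^+)+1) = x^+ + 1 = \sigma(x^+)$, so $\tilde{\sigma}$ can differ from $\sigma$ only at the (three, resp.\ four) right-endpoints of the blocks. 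A direct calculation from the formulas for $\gamma_{i_1,i_2}$ and $\gamma_{i_1,i_2,i_3}$ shows that the discrepancy $\delta := \tilde{\sigma}\sigma^{-1}$ is the identity off the set of block-starts and, on that set, acts as the $3$-cycle $(0^+\ i_1^+\ i_2^+)$ in case (1) and as the product of disjoint transpositions $(0^+\ i_2^+)(i_1^+\ i_3^+)$ in case (2). Consequently $\tilde{\sigma}\pi = \delta\alpha$, so I need only track how left-multiplication by $\delta$ changes the cycle count of $\alpha$.

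For this I would invoke the elementary fact that left-multiplying a permutation $\beta$ of a finite set by a transposition $\tau = (x\ y)$ changes $\orb(\beta)$ by $+1$ if $x, y$ lie in the same $\beta$-orbit (the orbit splits in two) and by $-1$ otherwise (two orbits merge). In case (2), write $\delta = (0^+\ i_2^+)(i_1^+\ i_3^+)$: by hypothesis, $\{0^+, i_2^+\} \subset \Ocl_\alpha(0^+)$ and $\{i_1^+, i_3^+\} \subset \Ocl_\alpha(i_1^+)$, and these two orbits are distinct. The first transposition is intra-orbit and splits $\Ocl_\alpha(0^+)$ while leaving $\Ocl_\alpha(i_1^+)$ untouched, and the second transposition then splits $\Ocl_\alpha(i_1^+)$ as well, giving a net change of $+2$.

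In case (1), I would decompose $\delta = (0^+\ i_2^+)(0^+\ i_1^+)$ (applying right-to-left). The arc decomposition $\Ocl_\alpha(0^+) = \Ocl_\alpha(0^+, i_2^+) \cdot \Ocl_\alpha(i_2^+, i_1^+) \cdot \Ocl_\alpha(i_1^+, 0^+)$ says that the $\alpha$-orbit of $0^+$ visits the points $0^+, i_2^+, i_1^+$ in that cyclic order. Applying $(0^+\ i_1^+)$ first splits this single orbit into two cycles: one formed from the concatenated arcs $\Ocl_\alpha(0^+, i_2^+) \cdot \Ocl_\alpha(i_2^+, i_1^+)$ (containing both $0^+$ and $i_2^+$) and the other formed from $\Ocl_\alpha(i_1^+, 0^+)$ (containing $i_1^+$). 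Since $0^+$ and $i_2^+$ still share a cycle, the subsequent transposition $(0^+\ i_2^+)$ is still intra-orbit and splits once more, producing three cycles where previously there was one; the net change is again $+2$.

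The main technical obstacle is the first step: verifying the precise form of $\delta$. This requires a careful case analysis of how the shift map $+1$ interacts with each block boundary of $\gamma$, so that the right-endpoints really do map to the block-starts $0^+, i_1^+, i_2^+$ (resp.\ $0^+, i_1^+, i_2^+, i_3^+$) in the pattern claimed; the cyclic-order condition in the hypothesis is exactly what is needed to ensure this matching. Once $\delta$ is pinned down, the remainder is a clean application of the transposition-splitting lemma together with the arc-decomposition hypothesis.
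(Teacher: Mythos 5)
Your proposal is correct and follows essentially the same route as the paper: the paper's Claim~\ref{claim: properties of alpha gamma} computes exactly your discrepancy permutation, showing that a conjugate of $\alpha_\gamma$ equals $\delta\alpha$ with $\delta$ the $3$-cycle on $\{0^+, i_1^+, i_2^+\}$, resp.\ the product $(0^+\ i_2^+)(i_1^+\ i_3^+)$, after which the orbit count follows from the arc decompositions. The only differences are cosmetic: the paper conjugates by $\gamma^{-1}(\args)\gamma$ rather than $\gamma(\args)\gamma^{-1}$ (with the former, $\delta$ is supported on the block-starts in the \emph{original} coordinates, which is what the hypothesis refers to, so you should double-check that your direction of conjugation really lands on $0^+, i_1^+, i_2^+$ rather than the new block-starts), and it reads off the three resulting orbits directly instead of factoring $\delta$ into transpositions.
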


To control the orbits of $\alpha$ we will need the following claim:
\begin{claim} \label{claim: properties of alpha gamma}
For $\gamma$ one of $\gamma_{i_1, i_2}$ or $\gamma_{i_1, i_2, i_3}$ as above and $\alpha_\gamma = \sigma \gamma^{-1} \pi \gamma$, we have the following formulas:
\[
\gamma_{i_1,i_2}^{-1} \alpha_{\gamma_{i_1,i_2}} \gamma_{i_1,i_2} (x)
=
\begin{cases}
i_1^+ & \text{if } x = \alpha^{-1}(0^+) \\
i_2^+ & \text{if } x = \alpha^{-1}(i_1^+) \\
0^+ & \text{if } x = \alpha^{-1}(i_2^+) \\
\alpha(x) & \text{else,}
\end{cases}
\]
and
\[
\gamma_{i_1,i_2,i_3}^{-1} \alpha_{\gamma_{i_1,i_2,i_3}} \gamma_{i_1,i_2,i_3} (x)
=
\begin{cases}
i_2^+ & \text{if } x = \alpha^{-1}(0^+) \\
i_3^+ & \text{if } x = \alpha^{-1}(i_1^+) \\
0^+ & \text{if } x = \alpha^{-1}(i_2^+) \\
i_1^+ & \text{if } x = \alpha^{-1}(0^+) \\
\alpha(x) & \text{else.}
\end{cases}
\]
\end{claim}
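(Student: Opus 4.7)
My plan is to prove the Claim by a direct computation on the definition $\gamma^{-1}\alpha_\gamma\gamma$, handling the cases $x \in \Icl^+$ and $x \in \Icl^-$ separately and exploiting that $\gamma$ is the identity on $\Icl^-$. I would first reduce the two halves of the Claim to a single computation for $\gamma_{i_1,i_2,i_3}$ by noting that $\gamma_{i_1,i_2}$ is nothing but $\gamma_{i_1,i_2,n}$ (the fourth block $v_4$ empty); under this identification the $4$-cycle $0^+ \to i_1^+ \to i_2^+ \to i_3^+ \to 0^+$ of the second formula collapses to the $3$-cycle $0^+ \to i_1^+ \to i_2^+ \to 0^+$ of the first formula, since $i_3^+ = n^+ = 0^+$ in $\Icl^+$.

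For $x \in \Icl^+$ one unwinds the composition directly: each factor of $\gamma^{\pm 1}$ that one meets acts either on $\Icl^+$ or, after $\pi$ is applied, on $\Icl^-$ (where $\gamma^{\pm 1}$ is the identity); using that $\pi$ is an involution the expression telescopes down to $\alpha(x)$. For $x \in \Icl^-$, the outer two applications of $\gamma^{\pm 1}$ act trivially, and the whole computation reduces to evaluating the conjugate $\gamma^{-1}\sigma\gamma\bigr|_{\Icl^+}$ at the point $\pi(x)\in\Icl^+$. The key structural observation is that $\gamma$ is a piecewise translation $y^+ \mapsto y^+ + c_j$ on each of the four subintervals $[0,i_1)$, $[i_1,i_2)$, $[i_2,i_3)$, $[i_3,n)$ of $\Icl^+$, and therefore commutes with the shift $\sigma\bigr|_{\Icl^+}\colon y^+ \mapsto y^+ + 1$ at every $y^+$ lying in the interior of one of these subintervals. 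Consequently $\gamma^{-1}\sigma\gamma$ agrees with $\sigma$ everywhere on $\Icl^+$ except at the four endpoints $(i_1 - 1)^+$, $(i_2 - 1)^+$, $(i_3 - 1)^+$, $(n - 1)^+$.

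It remains to identify the corresponding special preimages and to read off the four values. Since $\alpha = \sigma\pi$ and $\sigma$ is the shift $+1$ on $\Icl^+$, one has $\alpha^{-1}(i^+) = \pi^{-1}((i-1)^+)$, so the four endpoints above are precisely the images under $\pi$ of the four preimages $\alpha^{-1}(i_1^+), \alpha^{-1}(i_2^+), \alpha^{-1}(i_3^+), \alpha^{-1}(0^+)$ appearing in the Claim. At each of these four special $x$, one plugs the piecewise formula for $\gamma$ into $\gamma^{-1}(\gamma(\pi(x)) + 1)$ (or the dual expression, depending on the convention) and reads off the target: the last index of a block $v_k$ in the old labelling is sent to the first index of the block that succeeds $v_k$ in the rearranged cyclic word $v_1 v_4 v_3 v_2$. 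This succession is exactly $v_1 \to v_4 \to v_3 \to v_2 \to v_1$, yielding the $4$-cycle $0^+ \to i_1^+ \to i_2^+ \to i_3^+ \to 0^+$ on the targets that the Claim asserts. I expect the main obstacle to be the bookkeeping at the four boundaries, keeping track of the four piecewise shifts in $\Z_n$ with the correct signs and wrap-around; once that is in place, the cyclic pattern is forced by the cyclic order of the rearranged blocks, and the two-block case falls out by the substitution $i_3 = n$.
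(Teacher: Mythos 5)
Your structural reduction is essentially the paper's: after conjugating, everything comes down to comparing $\gamma^{-1}\sigma\gamma$ with $\sigma$ on $\Icl^+$ (the paper writes this as $\gamma^{-1}\alpha_\gamma\gamma\alpha^{-1}=\gamma^{-1}\sigma\gamma\sigma^{-1}$ being the identity off finitely many points), these agree wherever $\gamma$ is a translation, so the only special inputs are the $\pi$-preimages of the block-boundary points $(i_1-1)^+,(i_2-1)^+,(i_3-1)^+,(n-1)^+$, which are exactly $\alpha^{-1}(i_1^+),\alpha^{-1}(i_2^+),\alpha^{-1}(i_3^+),\alpha^{-1}(0^+)$. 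Up to there your argument is sound and matches the paper.

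The error is in the final reading-off. Your own mechanism --- the last old index of $v_k$ is sent to the first old index of the successor of $v_k$ in the rearranged cyclic word $v_1v_4v_3v_2$ --- is correct but does \emph{not} produce the $4$-cycle $0^+\to i_1^+\to i_2^+\to i_3^+\to 0^+$. Carrying it out: the successor of $v_1$ is $v_4$, whose first old index is $i_3^+$, so $\alpha^{-1}(i_1^+)\mapsto i_3^+$; the successor of $v_4$ is $v_3$, so $\alpha^{-1}(0^+)\mapsto i_2^+$; the successor of $v_3$ is $v_2$, so $\alpha^{-1}(i_3^+)\mapsto i_1^+$; the successor of $v_2$ is $v_1$, so $\alpha^{-1}(i_2^+)\mapsto 0^+$. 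The correction to $\alpha$ is therefore the product of two transpositions $(0^+\;i_2^+)(i_1^+\;i_3^+)$, which is what the Claim asserts (its fourth case is a misprint for $x=\alpha^{-1}(i_3^+)$), and not a $4$-cycle; this reflects the fact that the interchange $v_1v_2v_3v_4\mapsto v_1v_4v_3v_2$ swaps $v_2$ with $v_4$ rather than cyclically rotating the blocks. The distinction is not cosmetic: Lemma \ref{lemma:modifying orbits} relies on exactly this double-transposition pattern to split the two orbits through $\{0^+,i_2^+\}$ and $\{i_1^+,i_3^+\}$ and gain two orbits, and a genuine $4$-cycle would alter the orbit count differently. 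Relatedly, your reduction of the first formula to the substitution $i_3=n$ is justified by collapsing your (incorrect) $4$-cycle to the $3$-cycle $(0^+\,i_1^+\,i_2^+)$; the correct double transposition does degenerate to that $3$-cycle when $i_3^+=0^+$, but two of the four case conditions then coincide, so this collapse needs its own short verification rather than a formal substitution.
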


\begin{proof}[Proof of Claim \ref{claim: properties of alpha gamma}]
We compute 
\begin{align*}
\gamma^{-1} \alpha_{\gamma} \gamma \alpha^{-1}(x) &= \gamma^{-1} \sigma \gamma \pi \gamma^{-1} \gamma \pi^{-1} \sigma^{-1}(x) \\
&= \gamma^{-1} \sigma \gamma \sigma^{-1}(x)
\end{align*}
and further observe that
$\gamma \sigma(x) = \sigma \gamma(x)$ unless 
\begin{itemize}
\item $x \in \sigma^{-1}( \{ 0^+, i_1^+, i_2^+ \} )$ if $\gamma = \gamma_{i_1, i_2}$ or 

\item $x \in \sigma^{-1}( \{ 0^+, i_1^+, i_2^+, i_3^+ \} )$ if $\gamma = \gamma_{i_1, i_2, i_3}$.

\end{itemize}
From this, we see that $\gamma^{-1} \alpha_{\gamma} \gamma \alpha^{-1}(x)=x$ unless 
\begin{itemize}
\item $x \in \{ 0^+, i_1^+, i_2^+ \}$, if $\gamma = \gamma_{i_1, i_2}$ or
\item $x \in \{ 0^+, i_1^+, i_2^+, i_3^+ \}$, if $\gamma = \gamma_{i_1, i_2, i_3}$.
\end{itemize}
A straightforward calculation completes the claim.
\end{proof}

\begin{proof}[Proof of Lemma \ref{lemma:modifying orbits}]
Observe that $\orb(\alpha_{\gamma}) = \orb(\gamma^{-1} \alpha_\gamma \gamma)$, hence it suffices to show the statement for the orbits of $\gamma^{-1} \alpha_\gamma \gamma$.

Suppose that $i_1, i_2$ are as in part (\ref{prop_case:orbits_case1}) of Lemma \ref{lemma:modifying orbits} and set $\gamma = \gamma_{i_1, i_2}$. 
By the previous claim every orbit of $\alpha$ not containing $\{ 0^+, i_1^+, i_2^+ \}$ will be equal to the orbits of $\gamma^{-1} \alpha_\gamma \gamma$.
Furthermore, we see that the orbits $\Ocl_{\gamma^{-1} \alpha_\gamma \gamma}(0^+)$, $\Ocl_{\gamma^{-1} \alpha_\gamma \gamma}(i_1^+)$ and $\Ocl_{\gamma^{-1} \alpha_\gamma \gamma}(i_2^+)$ are distinct. 
Hence indeed
$\orb(\alpha_\gamma) = \orb(\gamma^{-1} \alpha_\gamma \gamma) = \orb(\alpha)+2$.

If $i_1, i_2, i_3$ are as part (\ref{prop_case:orbits_case2}) of Lemma \ref{lemma:modifying orbits}, then
set $\gamma = \gamma_{i_1,i_2,i_3}$ and again observe that every orbit of $\alpha$ not containing $\{ 0^+, i_1^+, i_2^+, i_3^+ \}$ will be equal to the orbits of $\gamma^{-1} \alpha_\gamma \gamma$.
 Moreover, we see that the orbits 
$\Ocl_{\gamma^{-1} \alpha_{\gamma} \gamma}(0^+)$,
 $\Ocl_{\gamma^{-1} \alpha_{\gamma} {\gamma}}(i_1^+)$, $\Ocl_{{\gamma}^{-1} \alpha_{{\gamma}} {\gamma}}(i_2^+)$ and $\Ocl_{{\gamma}^{-1} \alpha_{{\gamma}} {\gamma}}(i_3^+)$ will be distinct
and so again
$\orb(\alpha_{{\gamma}}) =  \orb(\alpha)+2$
\end{proof}

\subsubsection{Proof of part $(i)$ of Theorem \ref{theorem: cl may be used to compute cl}} \label{subsec:proof of exchanging sequences theorem}

We follow the general startegy of \cite{Ivanov-Fialkovski}.
We first need the following claims:

\begin{claim} \label{claim: cl cbi one down}
If $v, w \in \Acl^+$ are related. Then either $\dcbi(v,w) = \cl_{F(\Acl)}(v,w) = 0$ or there is a cyclic block interchange $\tilde{v}$ of $v$ such that $\cl_{F(\Acl)}(\tilde{v} + w^{-1}) = \cl_{F(\Acl)}(v + w^{-1}) - 1$.
\end{claim}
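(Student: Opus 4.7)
The plan is to exploit the orbit analysis of Section \ref{subsec:orbits of sigma pi} applied to an optimal pairing. Fix $\pi^\ast \in \Pi_{v + w^{-1}}$ realising the minimum in Bardakov's formula
$$
\cl_{F(\Acl)}(v + w^{-1}) = \frac{n}{2} - \frac{\orb(\sigma \pi^\ast)}{2},
$$
set $\alpha = \sigma \pi^\ast$, and apply Lemma \ref{lemma:special points for alpha}. If case (1) occurs, every orbit of $\alpha$ has size two, so $\orb(\alpha) = n$ and hence $\cl_{F(\Acl)}(v + w^{-1}) = 0$. By Proposition \ref{prop:formulas for cl}, this produces $t_1, t_2 \in F(\Acl)$ with $t_1 v t_1^{-1} = t_2 w t_2^{-1}$, so $v$ and $w$ are conjugate in $F(\Acl)$. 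As both are positive, and therefore cyclically reduced, they must be cyclic permutations of one another, giving $\dcbi(v, w) = 0$.

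In the remaining cases (2) and (3), I first cyclically permute $v$ so that the distinguished index $i_0^+$ becomes $0^+$; this preserves $\cl_{F(\Acl)}(v + w^{-1})$ and does not alter the set of cyclic block interchanges available to $v$. The hypotheses of Lemma \ref{lemma:modifying orbits} are then in force, and I take $\gamma = \gamma_{i_1, i_2}$ in case (2) or $\gamma = \gamma_{i_1, i_2, i_3}$ in case (3), letting $\tilde v$ denote the cyclic block interchange of $v$ corresponding to $\gamma$ as described in Section \ref{subsec:modifying the orbits}. Then $\gamma^{-1} \pi^\ast \gamma$ is a valid pairing of $\tilde v + w^{-1}$, and Lemma \ref{lemma:modifying orbits} yields $\orb(\sigma \gamma^{-1} \pi^\ast \gamma) = \orb(\alpha) + 2$. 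Applied to $\tilde v + w^{-1}$, Bardakov's formula gives $\cl_{F(\Acl)}(\tilde v + w^{-1}) \leq \cl_{F(\Acl)}(v + w^{-1}) - 1$. That $\tilde v$ is a genuine cyclic block interchange of $v$ (as opposed to a mere cyclic permutation) is ensured by the strict cyclic ordering of the indices supplied by Lemma \ref{lemma:special points for alpha}.

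For the matching lower bound, I invoke the chain subadditivity in Proposition \ref{prop:formulas for cl}:
$$
\cl_{F(\Acl)}(v + w^{-1}) \leq \cl_{F(\Acl)}(v + \tilde v^{-1}) + \cl_{F(\Acl)}(\tilde v + w^{-1}),
$$
so it suffices to show $\cl_{F(\Acl)}(v + \tilde v^{-1}) \leq 1$ for any cyclic block interchange $\tilde v$ of $v$. Taking cyclic representatives $v' = w_1 w_2 w_3 w_4$ of $v$ and $\tilde v' = w_1 w_4 w_3 w_2$ of $\tilde v$, and conjugating the two summands appropriately, $v + \tilde v^{-1}$ represents the single conjugacy class of $w_2 w_3 w_4 w_2^{-1} w_3^{-1} w_4^{-1}$. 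A straightforward cyclic reshuffle identifies this with
$$
w_4 w_2^{-1} w_3^{-1} w_4^{-1} w_2 w_3 \;=\; [\,w_4 w_2^{-1},\, w_3^{-1} w_4^{-1}\,],
$$
exhibiting it as a single commutator, so $\cl_{F(\Acl)}(v + \tilde v^{-1}) \leq 1$ and the two inequalities combine to give the claimed equality.

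I expect the main bookkeeping obstacle to be verifying that the index-level map $\gamma$ really does correspond to a word-level cyclic block interchange producing $\tilde v$, and that after relabeling the positions of $v$ the involution $\gamma^{-1} \pi^\ast \gamma$ qualifies as a pairing of $\tilde v + w^{-1}$ in the sense of Section \ref{subsec:bardakov}. Degenerate situations in which one of the $w_i$ is empty are harmless: either $\tilde v = v$ (contradicting the orbit count unless we are already in case (1)) or the would-be commutator collapses to $[w_i, w_j]$ or to the identity, both of which have commutator length at most one.
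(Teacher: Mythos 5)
Your proposal is correct and follows essentially the same route as the paper: fix a Bardakov-optimal pairing, apply the trichotomy of Lemma \ref{lemma:special points for alpha}, and in the non-trivial cases use the index permutations $\gamma_{i_1,i_2}$ or $\gamma_{i_1,i_2,i_3}$ together with Lemma \ref{lemma:modifying orbits} to drop the commutator length by one. The only difference is that you also prove the reverse inequality $\cl_{F(\Acl)}(\tilde v + w^{-1}) \geq \cl_{F(\Acl)}(v+w^{-1})-1$ via subadditivity and the single-commutator identity $w_4w_2^{-1}w_3^{-1}w_4^{-1}w_2w_3 = [w_4w_2^{-1}, w_3^{-1}w_4^{-1}]$, which the paper defers to the separate Claim \ref{claim: a cbi of b then cl 1} and leaves implicit in its own proof of the stated equality; this makes your argument slightly more self-contained but not different in substance.
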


\begin{proof}
Let $v,w \in \Acl^+$ are two related words. 
If $v,w$ are cyclic conjugates of each other then we see that $\cl_{F(\Acl)}(v+w^{-1}) = 0 = \dcbi(v,w)$.
Else, suppose that $v,w$ are related but not cyclic conjugates of each other. Recall that
$$
\cl_{F(\Acl)}(v + w^{-1}) = \min_{\pi \in \Pi_{v+w^{-1}}} \{  \frac{n}{2} - \frac{\orb(\sigma \pi)}{2} \}
$$
where $\Pi_{v+w^{-1}}$ is as in the previous Section. Fix a $\pi \in \Pi_{v + w^{-1}}$ which realizes this minimum.
We define $\Icl_{v+w^{-1}}$, $\sigma \col \Icl_{v+w^{-1}} \to \Icl_{v+w^{-1}}$ and $\alpha = \sigma \pi$ as in Sections \ref{subsec:orbits of sigma pi} and \ref{subsec:modifying the orbits}.
By Lemma \ref{lemma:special points for alpha} there are the following cases:
\begin{enumerate}
\item $\alpha^2(x) = x$ for all $x \in \Icl^+$. Then we see that $v$ and $w$ are cyclic conjugates which is a contradiction. 
\item There are cyclically ordered elements $(i_0, i_1, i_2)$, such that
\[
\Ocl_\alpha(i_0^+) = \Ocl_\alpha(i_0^+, i_2^+) \cdot \Ocl_\alpha(i_2^+, i_1^+) \cdot \Ocl_\alpha(i_1^+, i_0^+).
\]
By cyclically relabeling, which corresponds to taking a cyclic permutation of $v$, we may further assume that $i_0 = 0$.
Define
\begin{align*}
v_1 &= \xt_{0^+} \ldots \xt_{(i_1-1)^+} \\
v_2 &= \xt_{i_1^+} \ldots \xt_{(i_2-1)^+} \\
v_3 &= \emptyset \\
v_4 &= \xt_{i_2^+} \ldots \xt_{(n-1)^+}
\end{align*}
and observe that the function $\gamma_{i_1,i_2} \pi \gamma_{i_1,i_2}^{-1}$ is a pairing for the chain $v_1 v_4 v_2 - w^{-1}$. 
By Lemma \ref{lemma:modifying orbits},
\[
\orb(\sigma \gamma_{i_1,i_2} \pi \gamma_{i_1,i_2}^{-1}) = \orb(\alpha_{\gamma_{i_1,i_2}}) = \orb(\alpha)-2 = \orb(\sigma \pi)-2,
\]
and hence
\begin{eqnarray*}
\cl_{F(\Acl)}(v_1 v_4 v_2 + w^{-1}) &\leq & \frac{n}{2} - \frac{\orb(\sigma \gamma_{i_1,i_2} \pi \gamma_{i_1,i_2}^{-1})}{2} \\
&=& \frac{n}{2} - \frac{\orb(\sigma \pi)}{2} - 1 \\
&=& \cl_{F(\Acl)}(v + w^{-1})-1,
\end{eqnarray*}
since $\pi$ is extremal and the claim follows for $\tilde{v} = v_1 v_4 v_2$.

\item There are cyclically ordered elements $(i_0, i_1, i_2, i_3)$, such that $\Ocl_\alpha(i_0^+)$ and $\Ocl_\alpha(i_1^+)$ are distinct orbits and
\begin{align*}
\Ocl_\alpha(i_0^+) = \Ocl_\alpha(i_0^+, i_2^+) \cdot \Ocl_\alpha(i_2^+, i_0^+) \\
\Ocl_\alpha(i_1^+) = \Ocl_\alpha(i_1^+, i_3^+) \cdot \Ocl_\alpha(i_1^+, i_3^+).
\end{align*}
Again by cyclical relabeling we may assume that $i_0 = 0$.
Similarly as before define
\begin{align*}
v_1 &= \xt_{0^+} \ldots \xt_{(i_1-1)^+} \\
v_2 &= \xt_{i_1^+} \ldots \xt_{(i_2-1)^+} \\
v_3 &= \xt_{i_2^+} \ldots \xt_{(i_3-1)^+} \\
v_4 &= \xt_{i_3^+} \ldots \xt_{(n-1)^+}
\end{align*}
and again observe that the map $\gamma_{i_1,i_2,i_3} \pi \gamma_{i_1,i_2,i_3}^{-1}$ is a pairing for the element $v_1 v_4 v_3 v_2 - w^{-1}$. 
Using Lemma \ref{lemma:modifying orbits} we see that
\[
\orb(\sigma \gamma_{i_1,i_2,i_3} \pi \gamma_{i_1,i_2,i_3}^{-1}) = \orb(\alpha_{\gamma_{i_1,i_2,i_3}}) = \orb(\alpha)-2 = \orb(\sigma \pi)-2,
\]
and hence
\begin{eqnarray*}
\cl_{F(\Acl)}(v_1 v_4 v_3 v_2 + w^{-1}) &\leq & \frac{n}{2} - \frac{\orb(\sigma \gamma_{i_1,i_2,i_3} \pi \gamma_{i_1,i_2,i_3}^{-1})}{2}  \\ 
&=& \frac{n}{2} - \frac{\orb(\sigma \pi)}{2} - 1  \\
&=& \cl_{F(\Acl)}(v+w^{-1})-1
\end{eqnarray*}

since $\pi$ is extremal and the claim follows for $\tilde{v} = v_1 v_4 v_3 v_2$.
\end{enumerate}
This completes the proof of the Claim \ref{claim: cl cbi one down}.
\end{proof}

\begin{claim} \label{claim: a cbi of b then cl 1}
If $a,b \in \Acl^+$ are words such $b$ is a cyclic block interchange of $a$ then $\cl_{F(\Acl)}(a + b^{-1}) \leq 1$.
\end{claim}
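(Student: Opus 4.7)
My plan is to exploit conjugation-invariance of commutator length on chains to align $b^{-1}$ with $a$ so that they telescope, and then to recognize the resulting element as a single commutator.

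First, unpacking the definition of a cyclic block interchange, there exist cyclic permutations $a'$ of $a$ and $b'$ of $b$, together with words $w_1, w_2, w_3, w_4 \in \Acl^+ \cup \{\emptyset\}$, such that $a' = w_1 w_2 w_3 w_4$ and $b' = w_1 w_4 w_3 w_2$. Any cyclic permutation is a conjugation, and by the definition of $\cl$ on chains recalled in Section~\ref{subsec:cl}, the commutator length of a chain is unchanged if each summand is replaced by an arbitrary conjugate. Thus I may replace $a$ by $a'$ and $b$ by the cyclic permutation $b'' = w_3 w_2 w_1 w_4$ of $b'$, so that
$$\cl_{F(\Acl)}(a + b^{-1}) = \cl_{F(\Acl)}(a' + (b'')^{-1}).$$

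Next, the same definition gives the upper bound $\cl_{F(\Acl)}(c_1 + c_2) \leq \cl_{F(\Acl)}(c_1 \cdot c_2)$ by taking all conjugators trivial. Multiplying out,
$$a' \cdot (b'')^{-1} = (w_1 w_2 w_3 w_4)(w_4^{-1} w_1^{-1} w_2^{-1} w_3^{-1}) = w_1 w_2 w_3 w_1^{-1} w_2^{-1} w_3^{-1},$$
after the central $w_4 w_4^{-1}$ telescopes. Hence it suffices to show $\cl_{F(\Acl)}(w_1 w_2 w_3 w_1^{-1} w_2^{-1} w_3^{-1}) \leq 1$.

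This last step is the one-line identity
$$w_1 w_2 w_3 w_1^{-1} w_2^{-1} w_3^{-1} = [w_1 w_2,\, w_3 w_2],$$
verified by expanding the right-hand side and cancelling the pair of adjacent $w_2$'s in the middle of the expansion. I do not foresee a serious obstacle: the only choices requiring a little thought are the cyclic rotation of $b'$ into $b''$, engineered precisely so that the $w_4$'s cancel in the product $a' \cdot (b'')^{-1}$, and the explicit commutator representative $[w_1 w_2, w_3 w_2]$. Degenerate cases where one or more of the $w_i$ are empty need no separate argument, since the identity above holds tautologically in those cases and the commutator length can only drop.
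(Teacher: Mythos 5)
Your proof is correct and essentially identical to the paper's: both pass to cyclic permutations so that one of the four blocks cancels in the product $a'\cdot(b')^{-1}$ (the paper conjugates away $a_1$ after multiplying, you rotate $b'$ so that $w_4$ cancels directly), leaving an element of the form $uvwu^{-1}v^{-1}w^{-1}$, which is then exhibited as a single commutator (the paper uses $[uv,\,wu^{-1}]$, you use $[uv,\,wv]$; both identities check out).
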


\begin{proof}
If $a,b \in \Acl^+$ are as in the claim then let $a'$ be a cyclic permutation of $a$ and let $b'$ be a cyclic permutation of $b$ such that $a' = a_1 a_2 a_3 a_4$ and $b' = a_1 a_4 a_3 a_2$. Then by using basic properties of commutator length (Proposition \ref{prop:formulas for cl}) we may estimate
\begin{eqnarray*}
\cl_{F(\Acl)}(a + b^{-1}) &=& \cl_{F(\Acl)}(a' + b'^{-1}) \\ 
&\leq & \cl_{F(\Acl)}(a_1 a_2 a_3 a_4 a_2^{-1} a_3^{-1} a_4^{-1} a_1^{-1}) \\
&=&  \cl_{F(\Acl)}(a_2 a_3 a_4 a_2^{-1} a_3^{-1} a_4^{-1}) \\
&=& \cl_{F(\Acl)}([a_2 a_3, a_4 a_2^{-1}]) \leq 1.
\end{eqnarray*}
\end{proof}

We may now prove part (i) of Theorem \ref{theorem: cl may be used to compute cl}:

\begin{proof}[Proof of part (i) of Theorem \ref{theorem: cl may be used to compute cl}]
Suppose that $v,w \in \Acl^+$ are two related words.
If $v$ and $w$ are cyclic permutations of each other then $\dcbi(v,w) = 0 = \cl(v + w^{-1})$.
Else, let $z^0, \ldots, z^k$ be a sequence of related words in $\Acl^+$ such that $z^0 = v$, $z^k = w$ and such that $z^i$ is a cyclic block interchange of $z^{i-1}$.
Then
$$
\cl_{F(\Acl)}(v + w^{-1}) \leq \cl_{F(\Acl)}( \sum_{i=1^k} z^{i-1} + (z^i)^{-1}) \leq \sum_{i=1^k} \cl_{F(\Acl)}(z^{i-1} + (z^i)^{-1}) \leq k
$$
where we used Claim \ref{claim: a cbi of b then cl 1}. Thus $\cl_{F(\Acl)}(v + w^{-1}) \leq \dcbi(v,w)$.

On the other hand, if $\cl_{F(\Acl)}(v + w^{-1}) > 0$, then by applying Claim \ref{claim: cl cbi one down} multiple times we obtain a sequence $z^0, \ldots, z^k$ with $k = \cl_{F(\Acl)}(v + w^{-1})$ such that $z^0 = v$, $w = z^k$ and such that $z^i$ is obtained from $z^{i-1}$ by a cyclic block interchange. Thus
$\cl_{F(\Acl)}(v + w^{-1}) \geq \dcbi(v,w)$.
Combining the inequalities we see that $\cl_{F(\Acl)}(v + w^{-1}) = \dcbi(v,w)$.
This finishes the prove of part (i) of Theorem \ref{theorem: cl may be used to compute cl}.
\end{proof}

\subsection{Polynomial reduction of CBI-$\Acl$ to CL-$F(\Acl)$} \label{subsec: poly redcution of cbi to cl}

The aim of this section is to show the ramaining part $(ii)$ of Theorem \ref{theorem: cl may be used to compute cl}:

\begin{manualtheorem}{\ref{theorem: cl may be used to compute cl} (ii)}
There is a polynomial time reduction from CBI-$\Acl$ to CL-$F(\Acl)$.
\end{manualtheorem}

We need the following result:
\begin{claim} \label{claim:reduction to single comm}
Let $v,w \in \Acl^+$ be two related words.
Then
$$
\cl_{F(\Acl)}(v + w^{-1}) = \min \{ \cl_{F(\Acl)}(v \tilde{w}^{-1}) \mid \tilde{w} \mbox{ is a cyclic conjugate of } w \}
$$
\end{claim}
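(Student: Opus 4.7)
I would prove the two inequalities separately. The easy direction $\cl_{F(\Acl)}(v + w^{-1}) \leq \min_{\tilde{w}} \cl_{F(\Acl)}(v\tilde{w}^{-1})$ follows directly from the definition of chain commutator length: every cyclic conjugate $\tilde{w}$ of $w$ has the form $\tilde{w} = u^{-1}wu$ for some prefix $u$ of $w$, so $\tilde{w}^{-1} = u^{-1}w^{-1}u$. Taking $t_1 = 1$ and $t_2 = u^{-1}$ in the definition of $\cl_{F(\Acl)}(v + w^{-1})$ yields $\cl_{F(\Acl)}(v + w^{-1}) \leq \cl_{F(\Acl)}(v \cdot u^{-1}w^{-1}u) = \cl_{F(\Acl)}(v\tilde{w}^{-1})$, and minimising over $\tilde{w}$ gives the inequality.

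The reverse inequality is the substantive part, and here I would use Bardakov's formula (Theorem \ref{thm:bardakov}) to translate both sides into a comparison of orbit counts. Let $\tilde{w}_j$ denote the cyclic conjugate of $w$ starting at position $j \in \{0, \ldots, n-1\}$. Labelling letters of $\tilde{w}_j^{-1}$ by their original $w$-indices, the set $\Pi$ of Bardakov pairings coincides for the chain $v + w^{-1}$ and each single word $v\tilde{w}_j^{-1}$, and Bardakov's formula gives
\begin{align*}
\cl_{F(\Acl)}(v + w^{-1}) &= \tfrac{n}{2} - \tfrac{1}{2}\max_{\pi \in \Pi}\orb(\sigma\pi), \\
\cl_{F(\Acl)}(v\tilde{w}_j^{-1}) &= \tfrac{n+1}{2} - \tfrac{1}{2}\max_{\pi \in \Pi}\orb(\sigma_j\pi),
\end{align*}
where $\sigma$ and $\sigma_j$ are the cyclic permutations on $\Icl^+ \cup \Icl^-$ associated to the chain and to the single word $v\tilde{w}_j^{-1}$, respectively. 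A direct inspection of positions shows that $\sigma_j$ agrees with $\sigma$ on every index except $(n-1)^+$ and $j^-$: where $\sigma$ sends $(n-1)^+ \mapsto 0^+$ and $j^- \mapsto (j-1)^-$, the map $\sigma_j$ swaps these two targets. Equivalently, $\sigma_j = \tau'\circ\sigma$ for the transposition $\tau' = (0^+,(j-1)^-)$, so $\sigma_j\pi = \tau'\cdot(\sigma\pi)$, and multiplying a permutation by a transposition changes its orbit count by exactly $\pm 1$.

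To finish, I take $\pi \in \Pi$ realising $\max_\pi\orb(\sigma\pi) = o_{\mathrm{ch}}$ and choose $j$ so that $\pi(0^+) = j^-$. Then $(j-1)^- = \sigma(j^-) = \sigma\pi(0^+)$, so $0^+$ and $(j-1)^-$ are adjacent elements of a single $\sigma\pi$-cycle; the transposition $\tau'$ therefore splits that cycle, giving $\orb(\sigma_j\pi) = o_{\mathrm{ch}} + 1$. Consequently
\[
\min_{\tilde{w}}\cl_{F(\Acl)}(v\tilde{w}^{-1}) \leq \cl_{F(\Acl)}(v\tilde{w}_j^{-1}) \leq \tfrac{n+1}{2} - \tfrac{o_{\mathrm{ch}}+1}{2} = \cl_{F(\Acl)}(v + w^{-1}).
\]
The main obstacle is the bookkeeping that identifies $\sigma_j$ as $\sigma$ premultiplied by the single transposition $\tau'$; once this is in place the argument reduces to the elementary fact that multiplication by a transposition shifts the orbit count by $\pm 1$ together with the explicit choice of $j$ above.
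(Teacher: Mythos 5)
Your proof is correct and takes essentially the same route as the paper: the easy inequality follows from the definition of chain commutator length, and the reverse inequality is obtained by fixing an optimal Bardakov pairing $\pi$ for the chain and choosing the cyclic conjugate $\tilde w_j$ (via $\pi(0^+)=j^-$) so that the successor permutation of the concatenated word differs from that of the chain by a transposition whose two entries lie in one $\sigma\pi$-orbit, so the orbit count increases by one and Bardakov's formula closes the gap $\tfrac{n+1}{2}$ versus $\tfrac{n}{2}$. Your packaging via the explicit transposition $\tau'=(0^+,(j-1)^-)$ is a slightly cleaner version of the paper's ``without loss of generality'' normalization; the only point to make explicit is that $v\tilde w_j^{-1}$ need not be cyclically reduced, so for the single word one should invoke only the upper-bound direction of Bardakov's formula (valid for any pairing of an unreduced word, as the paper remarks), which is exactly the direction your argument uses.
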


\begin{proof}
From the definition of $\cl_{F(\Acl)}(v + w^{-1})$ it is clear that
$\cl_{F(\Acl)}(v + w^{-1})$ is less or equal than the right-hand side.
Moreover, if $\cl_{F(\Acl)}(v + w^{-1}) = 0$ then $v$ and $w$ are cyclic permutations of each other and thus the statement is clear.

Thus suppose that $\cl_{F(\Acl)}(v + w^{-1}) > 0$.
As before suppose that $v = \xtt_{0^+} \cdots \xtt_{(n-1)^+}$ and $w = \xtt_{0^-} \cdots \xtt_{(n-1)^-}$ and let $\pi$ be a pairing which realizes the commutator length $\cl_{F(\Acl)}(v + w^{-1})$. Let $\sigma \col \Icl_{F(\Acl)} \to \Icl_{F(\Acl)}$ be as above.

As $\cl_{F(\Acl)}(v + w^{-1}) > 0$, we may find an $i^+ \in \Icl^+$ such that $(\sigma \pi)^2(i^+) \not = i^+$.
Without loss of generality we assume that $(n-1)^+$ has this property and that $\pi((n-1)^+) = (n-1)^-$.
Observe that Bardakov's algorithm is also valid if the words are not reduced.
Thus, for $\tilde{w} = w$ we will estimate the commutator length of
$$
z = v \tilde{w}^{-1}.
$$
Note that $\pi$ as above remains a valid pairing for this word.
The map $\sigma$ in Bardakov's algorithm takes the form
$$
\sigma_z (i) =
\begin{cases}
0^- & \mbox{if } i = 0^+ \\
(n-1)^+ & \mbox{if } i = (n-1)^-, \mbox{ and} \\
\sigma(i) & \mbox{else.}
\end{cases}
$$
Then the orbits of $0^+$ of $\alpha$ contain $(n-1)^-$ since $\pi((n-1)^-) = (n-1)^+$.
The orbits of $\alpha_z$ are the same as the orbits of $\alpha$ apart from that this orbit gets split into the orbits $\Ocl_{\alpha_z}(0^+) = \Ocl_{\alpha}(0^+, (n-1)^-)$ and $\Ocl_{\alpha_z}((n-1)^-) = ( (n-1)^-, 0^+)$.
Thus $\orb(\alpha_z) = \orb(\alpha) + 1$. Bardakov yields that
$$
 \cl_{F(\Acl)}(v \tilde{w}) \leq \cl_{F(\Acl)}(v + w^{-1}).
$$
This finishes the claim.
\end{proof}

We may now finish the proof of part $(ii)$ of Theorem \ref{theorem: cl may be used to compute cl}.

\begin{proof}[Proof of part $(ii)$ of Theorem \ref{theorem: cl may be used to compute cl}]

Let $v, w \in \Acl^+$ and $n \in \N$ be the input of the decision problem CBI-$\Acl$. Then we know by part $(i)$ of Theorem \ref{theorem: cl may be used to compute cl} that $\dcbi(v,w) = \cl_{F(\Acl)}(v + w^{-1})$. Claim \ref{claim:reduction to single comm} asserts    that in time $|w|$ we may decide $\cl_{F(\Acl)}(v + w^{-1}) \leq n$, using the decision problem CL-$F(\Acl)$.
This finishes the proof of Theorem \ref{theorem: cl may be used to compute cl}.
\end{proof}

\section{CBI is NP-complete} \label{sec:cbi}

The aim of this section is to show Theorem \ref{theorem:cbi np complete}:

\begin{manualtheorem}{\ref{theorem:cbi np complete}}
Let $\Acl$ be an alphabet with $| \Acl | \geq 2$. Then CBI-$\Acl$ is $\NP$-complete.
\end{manualtheorem}

We will show this using a reduction from $3$-PARTITION to CBI:

\begin{defn}[$3$-PARTITION]
Let $n$ and $N$ be natural numbers where $N$ is uniformly polynomially bounded by $n$. Moreover, let $a_1, \ldots, a_{3n} \in \N$ be integers which satisfy that $N/4 < a_i < N/2$ 
for every $i$ and such that $\sum_{i=1}^{3n} = n N$. 

Then the decision problem which decides if there a partition of $a_1, \ldots, a_{3n}$ into $n$ families consisting of exactly $3$ elements such that the sum of each family is $N$ is called \emph{3-PARTITION}.
The size of the input is polynomial in $n$.
\end{defn}
It is well-known that $3$-PARTITION is $\NP$-complete \cite{comp_intrac}.

This section is organized as follows: In Section \ref{subsec: proof if A has 4 elts} we prove Theorem \ref{theorem:cbi np complete} if $\Acl$ has hat least $4$ elements. In Section \ref{subsec: reduction to binary alphabets} we reduce CBI-$\Acl$ to CBI for binary alphabets, thus proving Theorem \ref{theorem:cbi np complete}.

\subsection{Proof of Theorem \ref{theorem:cbi np complete} if $\Acl$ has $4$ elements} \label{subsec: proof if A has 4 elts}

In this section we will modify the reduction presented in the proof of \cite[Theorem 11]{rev_transp} to reduce CBI-$\Acl$ to $3$-PARTITION if $\Acl = \{ \att, \btt, \ctt, \dtt \}$.

Let $n$, $N$ and $a_1, \ldots, a_{3n}$ be an instance of $3$-PARTITION.
Analogously to the proof of \cite[Theorem 11]{rev_transp} we define words $v$ and $w$ via
\begin{eqnarray*}
v &=&  \att^{n+1} (\btt \ctt^{a_1} \dtt) (\btt \ctt^{a_2} \dtt) \cdots (\btt \ctt^{a_{3n}} \dtt) \btt \text{, and}\\
w &=& (\att \ctt^N \dtt^3)^n \att \btt^{3n+1}.
\end{eqnarray*}
Observe that $v$ and $w$ are related.

\begin{prop}
The above instance of $3$-PARTITION is solvable if and only if $\dcbi(v,w) = 3n$.
\end{prop}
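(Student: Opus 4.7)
The plan is to prove both directions of the equivalence by separately handling a constructive upper bound (when a $3$-partition exists) and an unconditional lower bound $\dcbi(v,w) \ge 3n$ together with a structural analysis showing that equality forces a partition. The instance constraints $N/4 < a_i < N/2$ will play a crucial role in the rigidity argument.

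For the ``$\Rightarrow$'' direction, given a valid $3$-partition $\{1,\ldots,3n\} = S_1 \sqcup \cdots \sqcup S_n$ with $\sum_{i \in S_j} a_i = N$, I would construct an explicit sequence of $3n$ cyclic block interchanges transforming $v$ into $w$. The strategy is to process one $S_j$ at a time using three interchanges: two to bring the three blocks $\btt \ctt^{a_i} \dtt$ with $i \in S_j$ into adjacency (so that the corresponding $\ctt^{a_i}$ runs merge into $\ctt^N$), and one to split one $\att$ off the $\att$-cluster and migrate the three liberated $\btt$'s to the accumulating $\btt$-tail. An induction on $j$ shows that after $3j$ interchanges the word is (up to cyclic rotation) of the form $(\att\ctt^N \dtt^3)^j \cdot u_j \cdot \btt^{3j}$ for some tail $u_j$ containing the still-unprocessed blocks, so that after $3n$ steps the result is $w$.

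For the ``$\Leftarrow$'' direction, I would first prove the unconditional bound $\dcbi(v,w) \ge 3n$ via Theorem \ref{theorem: cl may be used to compute cl}(i), which identifies $\dcbi(v,w)$ with $\cl_{F(\Acl)}(v + w^{-1})$, combined with Bardakov's formula $\cl(v + w^{-1}) = \tfrac{|v|+|w|}{4} - \tfrac{o}{2}$, where $o$ is the maximum number of orbits of $\sigma\pi$ over pairings $\pi \in \Pi_{v+w^{-1}}$. Bounding $o$ from above requires a breakpoint-style argument in the spirit of \cite{rev_transp}: count the cyclic adjacencies in $v$ whose letter-type does not appear as an adjacency in $w$ (the bad types are $\att\att$, $\btt\ctt$, $\dtt\btt$), and check that any single cyclic block interchange --- which only touches the four cyclic boundaries at the endpoints of $w_2$ and $w_4$ --- can turn at most three bad adjacencies into good ones. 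The precise counting of bad adjacencies in $v$ together with this ``at most three per step'' principle gives the desired lower bound. To extract the $3$-partition when $\dcbi(v,w) = 3n$, I would show that every interchange in an optimal sequence must be maximally efficient, meaning it must fuse three $\ctt^{a_i}$ runs into a single length-$N$ run: the constraint $N/4 < a_i < N/2$ forces any set of $a_i$'s summing to $N$ to have cardinality exactly three, ruling out alternative efficient configurations. The indices combined in the $n$ groups of three interchanges then define the partition $S_1, \ldots, S_n$.

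The main obstacle will be the ``$\Leftarrow$'' direction, particularly the rigidity step: one must rule out a priori many alternative ways for three consecutive interchanges to contribute $9$ units of progress without gathering three complete $\ctt^{a_i}$ blocks into one $\ctt^N$ run. Establishing that the $\ctt$-letters from three distinct $a_i$'s are forced to end up merged contiguously -- rather than, say, split across two partial runs -- is where the size constraint $N/4 < a_i < N/2$ is indispensable, since it prevents both pair- and four-element alternatives that would otherwise realize the same breakpoint reduction. A careful case analysis at the level of the four cyclic boundaries of each interchange, together with bookkeeping of how $\btt$'s, $\dtt$'s and $\att$'s redistribute, will be needed to close this gap.
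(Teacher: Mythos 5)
Your forward direction is fine and matches the paper's (the paper is even terser: one interchange per block $\ctt^{a_i}\dtt$, $3n$ in total). The genuine gap is in the lower bound $\dcbi(v,w)\ge 3n$. The unsigned breakpoint count you propose is quantitatively insufficient: the cyclic adjacency types of $v$ that are absent from $w$ are indeed $\att\att$, $\btt\ctt$, $\dtt\btt$, but they occur $n+3n+3n=7n$ times in $v$, so even granting your claim that each interchange repairs at most three of its four junctions (itself unproved), you only get $\dcbi(v,w)\ge \lceil 7n/3\rceil$, which is strictly less than $3n$. The paper closes this by using a \emph{signed} potential
$$
\nu = \nu_{\att\att}+\nu_{\btt\ctt}+\nu_{\ctt\dtt}+\nu_{\dtt\btt}-\nu_{\att\ctt}-\nu_{\ctt\ctt}-\nu_{\dtt\dtt}-\nu_{\dtt\att}-\nu_{\btt\btt}
$$
(cyclic two-letter subword counts, in the style of Brooks quasimorphisms), computing $\nu(v)-\nu(w)=18n$ and verifying by an exhaustive $16^4$ case check on the boundary letters of the four blocks that a single cyclic block interchange changes $\nu$ by at most $6$; this gives $\dcbi\ge 18n/6=3n$. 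The essential idea you are missing is that a move must be charged not only for destroying $v$-type adjacencies but also for \emph{creating} $w$-type ones, and only such a signed combination yields the right constant. Routing the lower bound through Theorem~\ref{theorem: cl may be used to compute cl}(i) and Bardakov's orbit formula does not help: bounding $\max_\pi \orb(\sigma\pi)$ is exactly as hard as bounding $\dcbi$ directly.

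Your rigidity step is also left open ("a careful case analysis will be needed"), whereas in the paper it comes for free from the same computation: the exhaustive check shows that any interchange achieving the maximal drop $\nu(x)-\nu(y)=6$ never cuts a $\ctt\ctt$ adjacency, so in an optimal sequence of length $3n$ every $\ctt^{a_i}$ travels as an unbroken block; the constraint $N/4<a_i<N/2$ then forces each run $\ctt^N$ of $w$ to be assembled from exactly three such blocks, producing the partition. So your overall architecture (upper bound plus matched lower bound plus rigidity at equality) is the right one, but the counting device that makes both the lower bound and the rigidity work is absent from your proposal.
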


\begin{claim}
If this instance of $3$-PARTITION is solvable, then $\dcbi(v,w) \leq 3 n$.
\end{claim}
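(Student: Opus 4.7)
The plan is to exhibit an explicit sequence of $3n$ cyclic block interchanges converting $v$ into $w$, built from the hypothesised $3$-partition.

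Fix a partition $\{1, \ldots, 3n\} = T_1 \sqcup \cdots \sqcup T_n$ with $\sum_{i \in T_j} a_i = N$ for every $j$. The plan is to proceed in $n$ phases, each consisting of exactly three cyclic block interchanges, maintaining the invariant that after phase $j$ the current word equals, up to cyclic permutation,
\[
(\att \ctt^N \dtt^3)^{j}\, \att^{n+1-j}\, \Big(\prod_{i \notin T_1 \cup \cdots \cup T_j}(\btt \ctt^{a_i}\dtt)\Big)\, \btt^{3j+1},
\]
for a suitable ordering of the remaining indices inside the product. The base case $j=0$ reproduces $v$ and the terminal case $j=n$ reproduces $w$, so maintaining this invariant suffices.

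In phase $j$, write $T_j = \{i_1,i_2,i_3\}$ and assume (by prescribing the ordering of the product in the invariant from the previous phase) that the three blocks $\btt\ctt^{a_{i_1}}\dtt$, $\btt\ctt^{a_{i_2}}\dtt$, $\btt\ctt^{a_{i_3}}\dtt$ appear consecutively at the front of the remaining-blocks product, directly following the leading $\att^{n+2-j}$. The three interchanges of phase $j$ are chosen to carry out, between them, the three jobs: (i)~merging the three $\ctt$-runs into a single $\ctt^{N}$-run attached to the growing prefix, (ii)~coalescing the three $\dtt$'s into $\dtt^{3}$ immediately following the merged $\ctt$-run, and (iii)~transporting the three liberated $\btt$'s onto the trailing $\btt$-cluster, increasing its length by exactly $3$. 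Since a single cyclic block interchange $w_1 w_2 w_3 w_4 \mapsto w_1 w_4 w_3 w_2$ swaps two substrings arbitrarily far apart in the cyclic word, each of the three steps can simultaneously process one block of $T_j$: plucking its $\btt$ to the terminal cluster while the resulting boundary-merging unites its $\ctt$-run with the growing $\ctt^N$-run and its $\dtt$ with the growing $\dtt$-run.

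The main technical obstacle will be bookkeeping: identifying, in each of the three interchanges per phase, the precise substrings $w_2$ and $w_4$ and the cyclic rotation of the current word in which the interchange is expressed, so that the invariant is restored at the end of the phase without disturbing the already-assembled prefix $(\att \ctt^N \dtt^3)^{j-1}$ nor the terminal $\btt^{3(j-1)+1}$-cluster apart from the intended $\btt$-deposit. I expect this to unfold by induction on $j$, with the inductive step being a careful matching of boundaries between $w_2, w_3, w_4$ and the adjacent letters of the word — the fact that the three blocks of $T_j$ are brought to the front at the start of the phase ensures that the per-phase moves are local to a well-defined ``working region'' and do not interfere with the finished prefix. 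The final count is then $n \cdot 3 = 3n$ cyclic block interchanges, as claimed.
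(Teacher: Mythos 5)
Your overall strategy ($n$ phases of $3$ interchanges each, with an invariant tracking a growing $(\att\ctt^N\dtt^3)^j$-prefix and $\btt$-cluster) is the right shape and matches the spirit of the paper's (very terse) argument, but as written there is a genuine gap at the step where you ``assume (by prescribing the ordering of the product in the invariant from the previous phase) that the three blocks of $T_j$ appear consecutively at the front of the remaining-blocks product.'' That ordering is not a free parameter: the blocks occur in $v$ in the fixed order $1,2,\ldots,3n$, the classes $T_j$ are arbitrary triples (e.g.\ $T_1=\{1,3,5\}$), and your phase-$j$ moves do not reorder the leftover blocks so as to make $T_{j+1}$ consecutive --- doing so would itself cost extra interchanges. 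So the induction cannot even get started for a generic partition. A second, related problem is the per-move description: a single cyclic block interchange relocates exactly two contiguous substrings, so one move cannot simultaneously send a block's $\btt$ to the terminal cluster, its $\ctt$-run to the growing $\ctt^N$-run, and its $\dtt$ to the growing $\dtt$-run, as these are three separate pieces with three separate destinations. Indeed your exact invariant (a single cluster $\btt^{3j+1}$ after phase $j$) appears unattainable in $3$ moves per phase; the liberated $\btt$'s have to be parked in pairs next to the surviving blocks and only coalesce at the very end.

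The fix is to drop the consecutiveness assumption and the strong invariant. For a class $\{i_1,i_2,i_3\}$ use: (1) the transposition that moves the \emph{entire} segment strictly between $\ctt^{a_{i_1}}$ and $\ctt^{a_{i_2}}$ (which begins with a $\dtt$ and ends with a $\btt$) to the position just after the $\dtt$ following $\ctt^{a_{i_2}}$ --- this creates one $\ctt\ctt$, one $\dtt\dtt$ and one $\btt\btt$ adjacency while destroying one $\ctt\dtt$, one $\btt\ctt$ and one $\dtt\btt$, and works whether or not the blocks are adjacent; (2) the analogous move joining the assembled run to $\ctt^{a_{i_3}}$; (3) the transposition moving the assembled $\ctt^N\dtt^3$ into the run of $\att$'s, splitting an $\att\att$ adjacency. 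Each move is ``perfect'' for the function $\nu$ of Lemma 4.4 (it changes $\nu$ by exactly $6$, which it must, since the lower-bound argument leaves no slack), and after $3n$ moves one reaches $w$ exactly. You should verify one small interleaved instance (say $n=2$, $T_1=\{1,3,5\}$, $T_2=\{2,4,6\}$) to see that the leftover $\btt$'s assemble correctly only at the last step.
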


\begin{proof}
If this instance of $3$-PARTITION is solvable, then we may move the $3n$ many subwords of the form $\ctt^{a_i} \dtt$ within the word to the $\att$'s. Thus, in this case,
 $\dcbi(v,w) \leq 3n$.
\end{proof}

\begin{claim} \label{claim:if cbi 3n then 3 part solvable}
If $\dcbi(v,w) \leq 3n$ then in fact $\dcbi(v,w) = 3n$ and the instance of $3$-PARTITION is solvable.
\end{claim}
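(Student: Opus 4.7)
The plan is to combine the upper bound from the previous claim with a matching lower bound $\dcbi(v,w) \geq 3n$, and then use the rigidity of any $3n$-move sequence to extract a $3$-partition.

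For the lower bound I would introduce a potential $\phi \colon \Acl^+ \to \N$ on related cyclic words with $\phi(v) = 3n$, $\phi(w) = 0$, and which decreases by at most $1$ under any cyclic block interchange. The natural first attempt is to take $\phi(u)$ to be the number of maximal $\btt$-runs in $u$ minus $1$: this is $3n$ at $v$ and $0$ at $w$. Writing the change in the number of $\btt\btt$-junctions across a single move $w_1 w_2 w_3 w_4 \mapsto w_1 w_4 w_3 w_2$ as the bilinear form
\[
(a_3 - a_1)(b_2 - b_4) + (a_2 - a_4)(b_1 - b_3),
\]
where $a_i = [L_i = \btt]$ and $b_i = [R_i = \btt]$ are the indicator values of the four boundary letters at the cut points, one sees that this naive $\phi$ can drop by at most $2$ per move, giving only $\dcbi(v,w) \geq 3n/2$. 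Following the strategy of \cite{rev_transp}, I would then refine $\phi$ by adding a correction term built from the $\ctt\dtt$-junctions of $u$ (which must decrease from $3n$ in $v$ to $n$ in $w$), designed so that whenever the $\btt$-term above achieves its maximum drop of $2$, the correction term must increase by at least $1$; the refined $\phi$ then drops by at most $1$ per cyclic block interchange.

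Given this lower bound, the hypothesis $\dcbi(v,w) \leq 3n$ forces $\dcbi(v,w) = 3n$ and every move in an optimal sequence $z^0 = v, \ldots, z^{3n} = w$ must drop $\phi$ by exactly $1$. The equality case in the bilinear analysis above is very restrictive: at each step, the move must excise exactly one unit $\btt \ctt^{a_i} \dtt$ from its place in the word, merge its two flanking $\btt$'s into the single growing $\btt$-cluster, and append the block $\ctt^{a_i} \dtt$ as a whole next to an existing $\att$. In particular, no $\ctt^{a_i}$-block is ever split over the course of the sequence.

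After $3n$ such moves the cyclic word must equal $w = (\att \ctt^N \dtt^3)^n \att \btt^{3n+1}$. Since no $\ctt$-block was split, each of the $n$ blocks $\ctt^N$ in $w$ is a concatenation of entire $\ctt^{a_i}$-blocks from $v$; the lengths then force the $a$-sums in each of the $n$ groups to equal $N$, and the size constraint $N/4 < a_i < N/2$ forces each such group to contain exactly three $a_i$'s, yielding a valid solution of the original $3$-PARTITION instance. The main technical obstacle is the refinement of $\phi$ by the correction term: verifying the ``at most one decrease per move'' condition via the bilinear case analysis is the combinatorial heart of the reduction, and is the analogue in the cyclic block interchange setting of the invariant argument in \cite{rev_transp}.
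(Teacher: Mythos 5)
Your overall strategy is the right one and matches the paper's: build a junction-counting potential with a large gap between $v$ and $w$ and a small per-move variation, then use the equality case to force rigidity. Your preliminary analysis is also correct --- the change in the number of $\btt\btt$-junctions under a move is exactly the bilinear form $(a_3-a_1)(b_2-b_4)+(a_2-a_4)(b_1-b_3)$, so the naive run-counting potential only yields $\dcbi(v,w)\geq 3n/2$. But the proof has a genuine gap precisely where you flag ``the combinatorial heart of the reduction'': you never define the corrected potential, and you do not verify either of the two properties it must have. The paper's actual potential is not ``$\btt$-runs plus a $\ctt\dtt$-term'' but the nine-term Brooks-type sum
\[
\nu = \nu_{\att \att} + \nu_{\btt \ctt} + \nu_{\ctt \dtt} + \nu_{\dtt \btt} - \nu_{\att \ctt} - \nu_{\ctt \ctt} - \nu_{\dtt \dtt} - \nu_{\dtt \att} - \nu_{\btt \btt},
\]
with $\nu(v)-\nu(w)=18n$ and per-move variation at most $6$; verifying the bound, and the crucial rigidity statement that a drop of exactly $6$ never cuts a $\ctt\ctt$-adjacency, is done by reducing to the $16^4$ possible boundary configurations and checking them by computer. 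Your proposed correction (the $\ctt\dtt$-junction count, with gap only $2n$) is not obviously sufficient, and no case analysis is carried out, so the lower bound $\dcbi(v,w)\geq 3n$ is not established.

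A second, smaller issue: your description of the equality case (``each move must excise exactly one unit $\btt\ctt^{a_i}\dtt$, merge its flanking $\btt$'s, and append $\ctt^{a_i}\dtt$ next to an $\att$'') is a description of the \emph{intended} optimal sequence, not a consequence you have derived; it is also stronger than what is needed. The paper only extracts the weaker rigidity statement that no $\ctt\ctt$-adjacency is ever cut, which already implies that each $\ctt^{N}$-block of $w$ is a union of whole $\ctt^{a_i}$-blocks of $v$, and then the constraint $N/4<a_i<N/2$ gives triples summing to $N$. That last step of yours is fine; it is the construction and verification of the potential (and of its equality case) that must actually be supplied for the claim to be proved.
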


To show this claim we will define an auxiliary function $\nu \col \Acl^+ \to \Z$ as follows:
 Let $\xtt, \ytt \in \Acl$ be two letters.
For a word $w \in \Acl^+$ we define $\nu_{\xtt \ytt}$ to be the number of times $\xtt \ytt$ is  a cyclic subword  of $w$. I.e.\ $\nu_{\xtt \ytt}(w)$ counts the number of occurrences of $\xtt \ytt$ as subwords in $w$ and it counts if the last letter of $w$ is $\xtt$ and the first letter of $w$ is $\ytt$. Observe that then $\nu_{\xtt \ytt}(w)$ is invariant under a cyclic permutation of the letters of $w$. 
We define define the function $\nu \col \Acl^+ \to \Z$ as
$$
\nu = \nu_{\att \att} + \nu_{\btt \ctt} + \nu_{\ctt \dtt} + \nu_{\dtt \btt} - \nu_{\att \ctt} - \nu_{\ctt \ctt} - \nu_{\dtt \dtt} - \nu_{\dtt \att} - \nu_{\btt \btt}.
$$
The definition of $\nu_{xy}$ is analogous to the homogeneous Brooks quasimorphisms \cite{brooks}.

\begin{lemma} \label{lemma: nu}
The function $\nu \col \Acl^+ \to \Z$ has the following properties:
\begin{enumerate}
\item \label{item:values of nu} 
$\nu(v) - \nu(w) = 18 n$ and
\item \label{item:computer part less than 6} If $x,y \in \Acl^+$ are such that $x$ is obtained from $y$ by a cyclic block interchange then $\nu(x)-\nu(y) \leq 6$.
\item \label{item: no cc occurences broken}
If $y$ is obtained from $x$ by a block interchange such that $\nu(x) - \nu(y) = 6$. Then the block interchange does not cut a consecutive sequence of $\ctt$.
\end{enumerate}
\end{lemma}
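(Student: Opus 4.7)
For part (\ref{item:values of nu}), I perform a direct enumeration of each of the nine weighted pair-frequencies in $v$ and in $w$. Reading $v$ cyclically, only $\nu_{\att\att}(v)=n$, $\nu_{\btt\ctt}(v)=\nu_{\ctt\dtt}(v)=\nu_{\dtt\btt}(v)=3n$ and $\nu_{\ctt\ctt}(v)=nN-3n$ are nonzero, yielding $\nu(v)=13n-nN$. The same counting in $w$ gives $\nu_{\ctt\dtt}(w)=n$, $\nu_{\att\ctt}(w)=n$, $\nu_{\ctt\ctt}(w)=nN-n$, $\nu_{\dtt\dtt}(w)=2n$, $\nu_{\dtt\att}(w)=n$ and $\nu_{\btt\btt}(w)=3n$, so $\nu(w)=-5n-nN$. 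Subtraction gives $\nu(v)-\nu(w)=18n$.

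For parts (\ref{item:computer part less than 6}) and (\ref{item: no cc occurences broken}) I exploit the locality of $\nu$. A cyclic block interchange takes (cyclically) $y=w_1 w_2 w_3 w_4$ to $x=w_1 w_4 w_3 w_2$ and therefore modifies only the four boundary pair-contributions to $\nu$. Writing $\ell_i$ and $f_i$ for the last and first letters of $w_i$ and $c(\xt\yt)\in\{-1,0,+1\}$ for the coefficient of $\nu_{\xt\yt}$ in $\nu$, I obtain the explicit identity
\begin{equation*}
\nu(x)-\nu(y) \;=\; \bigl[c(\ell_1 f_4)+c(\ell_4 f_3)+c(\ell_3 f_2)+c(\ell_2 f_1)\bigr] - \bigl[c(\ell_1 f_2)+c(\ell_2 f_3)+c(\ell_3 f_4)+c(\ell_4 f_1)\bigr].
\end{equation*}
This difference depends only on the eight-letter tuple $(\ell_i,f_i)_{i=1}^{4}\in\Acl^8$. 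For (\ref{item:computer part less than 6}), the claim $\nu(x)-\nu(y)\leq 6$ is verified by exhausting the $4^8=65{,}536$ possibilities, which is precisely what the MATLAB routine provided in Section~\ref{sec:appendix} does. For (\ref{item: no cc occurences broken}), the same enumeration additionally records which tuples attain the maximum value $6$, and a further pass confirms that none of the four cut-boundaries of $y$, namely $(\ell_1,f_2),(\ell_2,f_3),(\ell_3,f_4),(\ell_4,f_1)$, equals $\ctt\ctt$; a $\ctt$-run is split by the interchange precisely when one of these boundary pairs is $\ctt\ctt$.

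Two small subtleties arise. First, if some $w_i$ is empty then two of the four boundaries collapse into one and the formula simplifies accordingly; these degenerate configurations are handled separately and yield no new maximizers. Second, the coefficients in $\nu$ are designed so that $\nu(v)-\nu(w)=18n=6\cdot 3n$ exactly matches the threshold imposed by (\ref{item:computer part less than 6}), which is what will force $\dcbi(v,w)\geq 3n$ in Claim~\ref{claim:if cbi 3n then 3 part solvable}, while (\ref{item: no cc occurences broken}) prevents any optimal interchange sequence from breaking the $\ctt^{a_i}$-blocks. The main obstacle is the sheer size of the case analysis in (\ref{item:computer part less than 6}): there is no evident closed-form argument, which is why a computer check is invoked.
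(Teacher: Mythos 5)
Your proof is correct and follows essentially the same route as the paper's: the same direct count of the nine pair-frequencies for part (\ref{item:values of nu}), and for parts (\ref{item:computer part less than 6}) and (\ref{item: no cc occurences broken}) the same reduction of $\nu(x)-\nu(y)$ to the boundary letters of the four blocks followed by the exhaustive computer check (your $4^8$ tuples of first/last letters are exactly the paper's $16^4$ choices of two-letter blocks), with the empty-block case treated separately in both. The only cosmetic difference is that you write out the boundary identity explicitly, which the paper leaves implicit.
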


\begin{proof}
To show item (\ref{item:values of nu}), observe that
$$
\begin{array}{lll}
\nu_{\att \att}(v) = n, & \nu_{\btt \ctt}(v)=3n, & \nu_{\ctt \dtt}(v) = 3 n, \\
\nu_{\dtt \btt}(v) = 3 n, & \nu_{\att \ctt}(v) = 0, & \nu_{\ctt \ctt}(v) = n N -3n, \\ 
\nu_{\dtt \dtt}(v) = 0, & \nu_{\dtt \att}(v) = 0, & \nu_{\btt \btt}(v) = 0,
\end{array}
$$
and hence $\nu(v) = 13 n - n N$. Similarly we compute
$$
\begin{array}{lll}
\nu_{\att \att}(w) = 0, & \nu_{\btt \ctt}(w)=0, & \nu_{\ctt \dtt}(w) = n, \\
\nu_{\dtt \btt}(w) = 0, & \nu_{\att \ctt}(w) = n, & \nu_{\ctt \ctt}(v) = n N -n, \\ 
\nu_{\dtt \dtt}(v) = 2 n, & \nu_{\dtt \att}(w) = n, & \nu_{\btt \btt}(w) = 3n,
\end{array}
$$
and thus $\nu(w) = - 5 n - n N$.
Combining both we see that $\nu(v) - \nu(w) = 18 n$.

For item (\ref{item:computer part less than 6}) observe that all of the $\nu_{\xtt \ytt}$ are invariant under cyclic permutations of the letters. Thus we may assume  that $x = x_1 x_2 x_3 x_4$ and $y = x_1 x_4 x_3 x_2$.
Then we see that for any $\nu_{\xtt \ytt}$ the value of
$\nu_{\xtt \ytt}(x) - \nu_{\xtt \ytt}(y)$ does just depend on the first and last letter of the $x_i$. Thus we may assume that each of the $x_i$ have wordlength at most $2$.

If one of the $x_i$ are empty then the block interchange only permutes three words. After cyclically permuting $y$ we may assume that $x = z_1 z_2 z_3$ and $y = z_1 z_3 z_2$ for some $z_1, z_2, z_3 \in \Acl$.
As there are just three breakpoints of $x$ and $y$ we see that $\nu(x)-\nu(y) \leq 6$ and if $\nu(x) - \nu(y) = 6$ then no $\ctt \ctt$ gets separated.

If none of the $x_1, x_2, x_3, x_4$ are empty, then we may assume that they all have word-length exactly $2$, since if any of the $x_i$ is a single letter $\xtt$ we may replace it by $\xtt \xtt$ to get the same result. Thus we may assume that all of the $x_i$ consist of exactly two letters.
For each $x_i$ there are thus $16$ possibilities.
This yields a total of $16^4 = 65536$ possibilities for $x_1, \ldots, x_4$ and in every case we may check that $\nu(x)-\nu(y) \leq 6$ and that $\nu(x)-\nu(y)=6$ only if no sequence of $\ctt \ctt$ gets broken. A MATLAB code which verifies this statement may be found in the appendix.
This finished the proof of the lemma.
\end{proof}

We are now ready to prove Claim \ref{claim:if cbi 3n then 3 part solvable}:
\begin{proof}[Proof of Claim \ref{claim:if cbi 3n then 3 part solvable}]
Suppose that  $v$, $w$ are as above   and that $\dcbi(v,w) \leq 3 n$.
Then there is a sequence $z^0, \ldots, z^m$ with $v = z^0$, $z^{m} = w$, $m = \dcbi(v,w)$ and such that $z^i$ is obtained from $z^{i-1}$ by a cyclic block interchange for all $i \in \{1, \ldots, m \}$.
By Lemma \ref{lemma: nu} we see that $\nu(z^i)-\nu(z^{i+1}) \leq 6$. Thus
$$
18 n = \nu(v) - \nu(w) = \sum_{i=0}^{m-1} \nu(z^i) - \nu(z^{i+1}) \leq 6 m
$$
and hence $3 n \leq \dbi^\circ(v,w)$ but by the assumptions of the claim we conclude that $3 n = \dcbi(v,w)$ and that $\nu(z^{i}) - \nu(z^{i+1}) = 6$ for all $i \in \{ 1, \ldots, m \}$.
But by Lemma \ref{lemma: nu} this means that $w$ is obtained from $v$ without cutting any $\ctt \ctt$ sequence.
Thus we obtain a valid solution for $3$-PARTITION.
\end{proof}

\subsection{Reduction to binary alphabets} \label{subsec: reduction to binary alphabets}

The aim of this section is to show Theorem \ref{theorem:cbi np complete} for binary alphabets.
We will use the previous section and show how we may compute $\dcbi$ for alphabets in four letters also in binary alphabets.
For what follows let $\Acl = \{ \att, \btt, \ctt, \dtt \}$ and let $\Bcl = \{ \xtt, \ytt \}$.
Following \cite[Theorem 9]{rev_transp} We will define the map
$$
\lambda \col \Acl^+ \to \Bcl^+
$$
as follows.
Given a positive word $v \in \Acl^+$ with $v = \vtt_1 \cdots \vtt_n$ with $\vtt_i \in \Acl$, we set 
$$
\lambda(v) = (\xtt \ytt^{\epsilon_{\vtt_1}} \xtt)^{4 n^2 + 1} \cdots (\xtt \ytt^{\epsilon_{\vtt_n}} \xtt)^{4 n^2 +1 }
$$
where
$\epsilon_\att = 2$, $\epsilon_\btt = 3$, $\epsilon_\ctt = 4$ and $\epsilon_\dtt = 5$.

\begin{lemma} \label{lemma: binary alphabets}
If $v,w \in \Acl^+$ are related, then $\lambda(v), \lambda(w) \in \Bcl^+$ are related and
$$
\dcbi(\lambda(v), \lambda(w)) = \dcbi(v,w).
$$
\end{lemma}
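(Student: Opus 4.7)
The plan is to prove the two inequalities $\dcbi(\lambda(v),\lambda(w))\leq\dcbi(v,w)$ and $\dcbi(\lambda(v),\lambda(w))\geq\dcbi(v,w)$ separately; the first is essentially formal, while the second is the combinatorial heart of the statement. Relatedness of the images in $\Bcl^+$ is immediate from the letter-by-letter definition of $\lambda$: each occurrence of $\vtt\in\Acl$ contributes exactly $2(4n^2+1)$ copies of $\xtt$ and $\epsilon_\vtt(4n^2+1)$ copies of $\ytt$ to the image, so related $v,w$ produce images with identical $\Bcl$-letter counts.

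For the upper bound, $\lambda$ extends to a group homomorphism $\Lambda\colon F(\Acl)\to F(\Bcl)$ sending $\vtt\mapsto(\xtt\ytt^{\epsilon_\vtt}\xtt)^{4n^2+1}$, so $\Lambda(v+w^{-1})=\lambda(v)+\lambda(w)^{-1}$ as chains. Since commutator length is monotone under group homomorphisms (Proposition \ref{prop:formulas for cl}, extended from single elements to chains by taking the minimum over conjugate representatives), we obtain $\cl_{F(\Bcl)}(\lambda(v)+\lambda(w)^{-1})\leq\cl_{F(\Acl)}(v+w^{-1})$, and Theorem \ref{theorem: cl may be used to compute cl}(i) converts both sides into $\dcbi$-values. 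Combinatorially, this reflects the obvious fact that any CBI on $v$ lifts to a CBI on $\lambda(v)$ by cutting at the corresponding $\lambda$-block boundaries.

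For the reverse inequality, let $u^0=\lambda(v),u^1,\ldots,u^k=\lambda(w)$ be an optimal sequence of CBIs in $\Bcl^+$ with $k=\dcbi(\lambda(v),\lambda(w))$. The strategy is to modify the sequence, without increasing $k$, so that every one of its at most $4k$ cuts falls at a position that corresponds under $\lambda$ to a letter-boundary in some intermediate $\Acl^+$-word; such a \emph{clean} sequence descends to a sequence of $k$ CBIs on $\Acl^+$ from $v$ to $w$, yielding $\dcbi(v,w)\leq k$. The crucial leverage is the exponent $4n^2+1$: a trivial bound gives $\dcbi(v,w)\leq|v|=n$, so $k\leq n$, and the total number of cuts in the sequence is at most $4n$ — far smaller than the length $\geq 4(4n^2+1)$ of any single $\lambda$-block. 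To exploit this, I plan to introduce a potential $\mu(u)$ measuring how much $u$ deviates from being in the image of $\lambda$, for instance by counting the number of maximal $\ytt$-runs whose length lies outside $\{\epsilon_\att,\epsilon_\btt,\epsilon_\ctt,\epsilon_\dtt\}=\{2,3,4,5\}$. Both endpoints satisfy $\mu=0$, a clean cut preserves the property $\mu=0$, while a dirty cut inside a block generically creates a forbidden $\ytt$-run and increases $\mu$. Any dirty cut must therefore be cancelled by a later dirty cut at a matching position, and the main technical step is to show that each such matched pair can be exchanged for a single clean cut at a nearby block boundary without increasing the sequence length; iterating this exchange produces the required all-clean sequence.

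The hardest part will be the exchange step itself: one must verify that shifting a dirty cut to the nearest letter-boundary, and correspondingly modifying its matched partner further along the sequence, preserves the effect of each intermediate CBI and strictly decreases the count of dirty cuts so that the procedure terminates. This is precisely where the choice of exponent $4n^2+1$ pays off: within each $\lambda$-block there is ample periodic slack to slide a cut to a letter-boundary and realign its partner without introducing new defects elsewhere in the sequence.
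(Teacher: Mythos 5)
Your first inequality, $\dcbi(\lambda(v),\lambda(w))\leq\dcbi(v,w)$, is fine; the direct lifting of each CBI to the block boundaries is all that is needed (the detour through $\cl$ and Theorem~\ref{theorem: cl may be used to compute cl}(i) is correct but unnecessary). The relatedness of $\lambda(v)$ and $\lambda(w)$ is also fine.

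The reverse inequality, however, is where the content of the lemma lies, and there your argument has a genuine gap: the ``exchange step'' that converts a dirty sequence into a clean one is precisely the hard part, and you only announce it rather than prove it. Concretely: (a) the potential $\mu$ you propose need not increase under a dirty cut --- a cut placed between the two adjacent $\xtt$'s of consecutive encoded letters, or inside a $\ytt^{5}$-run that later recombines with a fragment of another run into a length in $\{2,3,4,5\}$, produces no forbidden $\ytt$-run at all, so dirty cuts need not be ``paired'' in the way your cancellation argument requires; (b) even granting an all-clean sequence, cuts at letter boundaries \emph{inside} a block $(\xtt\ytt^{\epsilon}\xtt)^{4n^2+1}$ are clean by your definition yet split the $4n^2+1$ copies between different destinations, possibly reassembling them with copies coming from a different position of $v$ carrying the same letter of $\Acl$, so the descent to a CBI sequence on $v$ is not immediate; (c) the claim that a shifted cut and its ``realigned partner'' still realize each intermediate CBI is exactly what must be verified and is not. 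The paper avoids sequence surgery altogether: since $t\leq n$ and each CBI makes at most $4$ cuts, at least $4n^2+1-4n$ of the $4n^2+1$ copies in each block survive the entire sequence uncut; tracking where uncut copies of $\lambda(v)$ end up in $\lambda(w)$ gives a bijection between the surviving families, and a counting estimate verifies Hall's marriage condition, so one may choose one uncut representative copy per letter of $v$ whose images hit distinct letters of $w$. The original CBI sequence, restricted to these $n$ marked representatives, is then itself a length-$t$ CBI sequence from $v$ to $w$. If you want to salvage your approach you would need to either carry out the exchange argument in full (including cases (a)--(c) above) or switch to a marked-representative argument of this kind.
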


\begin{proof}

It is easy to see that if $v' \in \Acl^+$ is a cyclic block transformation of $v$ then $\lambda(v')$ is a cyclic block transformation of $v$.
Thus we see that
$$
\dcbi(\lambda(v), \lambda(w)) \leq \dcbi(v,w).
$$
We want to show the other direction. 
Fix a sequence $z^0, \ldots, z^t$, with $z^0 = \lambda(v)$, $z^t = \lambda(w)$, such that each $z^i$ is obtained from $z^{i-1}$ by a cyclic block interchange and where $\dcbi(\lambda(v), \lambda(w)) = t$. Note that by the above inequality we may crudely estimate $t \leq n$.

Suppose that $v = \vtt_1 \cdots \vtt_n$ and $w = \wtt_1 \cdots \wtt_n$. 
Every cyclic block transformations cuts the words $\lambda(z^i)$ in at most $4$ places. Thus there are at most $4n$ cuts from $\lambda(v)$ to $\lambda(w)$.
For every index $i \in \{ 1, \ldots, n \}$ let $\Vcl_i$ be the family of subwords $(\xtt \ytt^{\epsilon_{\vtt_i}} \xtt)$ which do not get cut by the cyclic block interchange, and similarly, let $\Wcl_i$ be the family of subwords for $(\xtt \ytt^{\epsilon_{\wtt_i}} \xtt)$ of $\lambda(w)$ which did not get cut in the sequence $z^0, \ldots, z^t$.
We note that we have that $4 n^2 + 1 - 4 n \leq |\Vcl_i | \leq 4 n^2 + 1$ and similarly $4 n^2 + 1 - 4n \leq | \Wcl_i | \leq 4 n^2 + 1$ for all $i \in \{1, \ldots, n \}$.
This also shows that all of the $\Vcl_i$ are non-empty as $|\Vcl_i| > 4 n^2 + 1 - 4n = (2 n^2 - 1)^2$ and $n \geq 1$. 
Moreover, by recording the cyclic block interchanges we get a bijection
$$
\rho \col \Vcl \to \Wcl
$$
where $\Vcl = \cup_{i=1}^n \Vcl_i$ and $\Wcl = \cup_{i=1}^n \Wcl_i$.
This bijection has the following property:

\begin{claim}
There is a choice of elements $v_i \in \Vcl_i$ for every $i \in \{1, \ldots, n \}$ such that $\rho(v_i) \in \Wcl_{\pi(i)}$, where $\pi$ is a permutation of $\{1, \ldots, n \}$
\end{claim}

\begin{proof}
Observe that the sets $\Vcl_1, \ldots, \Vcl_n$, $\Wcl_1, \ldots, \Wcl_n$ and the map $\rho$ have the following property:
There are no subsets $S, T \subset \{ 1, \ldots, n \}$ with $|S|>|T|$ such that
$$
\rho(\cup_{i \in S} \Vcl_i) \subset \cup_{j \in T} \Wcl_j.
$$
To see this suppose that $S$ and $T$ fulfill the above inclusion. Then we estimate
\begin{eqnarray*}
|\rho(\cup_{i \in S} \Vcl_i)| & \geq & |S| (4 n^2 + 1 - 4 n) \mbox{ and} \\
| \cup_{j \in T} \Wcl_j| & \leq & |T| (4 n^2 + 1) \leq (|S|-1)(4 n^2 + 1)
\end{eqnarray*}
using that $|T| \leq |S| - 1$. Thus
\begin{eqnarray*}
(|S|-1)(4 n^2 + 1) & \geq & |S| (4 n^2 + 1 - 4 n) \\
 - 4 n^2  - 1  & \geq & - 4 n |S|
\end{eqnarray*}
which is a contradiction as $|S| \leq n$.
Thus the collection of sets $\Vcl$ satisfies the marriage condition of Hall's marriage theorem \cite{Hall}.

\end{proof}

Thus, we may focus on the elements $v_i$. The block permutations do not affect those subwords. Thus, every sequence of block permutations to the $v_i$ may be done to the original word $v$. This shows that $\dcbi(v,w) = t$, which is a contradiction.
\end{proof}

We may now prove Theorem \ref{theorem:cbi np complete}:

\begin{proof}[Proof of Theorem \ref{theorem:cbi np complete}]
Let $\Acl$ be an alphabet.
We may assume that $\Acl$ just has two letters, else we may restrict it accordingly.
We know that CBI is NP hard if $\Acl$ has four vertices. Using Lemma \ref{lemma: binary alphabets} we may reduce this in polynomial time to the case where $\Acl$ is a binary alphabet.
\end{proof}

\section{Proof of Theorem \ref{theorem:cl np complete}}
\label{sec:proof of thm a}

By combining all the results of this paper we may prove Theorem \ref{theorem:cl np complete}:

\begin{proof}[Proof of Theorem \ref{theorem:cl np complete}]
Let $F(\Acl)$ be the a non-abelian free group on some alphabet $\Acl$. Thus, $|\Acl| \geq 2$.
By Corollary \ref{corr:cl is in np} CL-$F(\Acl)$ is in $\NP$. Combining Theorem \ref{theorem:cbi np complete} and Theorem \ref{theorem: cl may be used to compute cl} (ii) we see that CL-$F(\Acl)$ is $\NP$-hard. We conclude that CL-$F(\Acl)$ is $\NP$-complete.

If, given an element $g \in F(\Acl)$, there is an algorithm which computes $\cl_{F(\Acl}(g)$ in polynomial time, then observe that $\cl_{F(\Acl)} \leq |g|$, by Bardakov's algorithm. Thus we would be able to decide CL-$F(\Acl)$ in polynomial time and thus P=NP. By Proposition \ref{prop:formulas for cl} the same holds if $G$ has a free retract.
\end{proof}

\section{Appendix: MATLAB program for Lemma \ref{lemma: nu}} \label{sec:appendix}
Here we give the codes to prove the rest of items (\ref{item:computer part less than 6}) and (\ref{item: no cc occurences broken}) of Lemma \ref{lemma: nu}.
First we give an implementation of $\nu_{\xtt \ytt}$:

\vspace{10pt}
\begin{verbatim}
function sol = nu(xy,v)
%%% compute \nu_{xy}(v)
count = 0;
for i=1:(length(v)-1)
   if v(i)==xy(1) && v(i+1)==xy(2)
       count = count + 1;
   end
end
%% check if there is an occurence of xy in the first and last letter of v:
if v(end)== xy(1) && v(1) == xy(2)
    count = count + 1;
end
sol = count;
end
\end{verbatim}
\vspace{10pt}

Now we use this to define $\nu$:

\vspace{10pt}
\begin{verbatim}
function sol = nu_total(v)
%%% computes $\nu(v)$:
sol = nu('aa',v) + nu('bc',v) + nu('cd',v) + nu('db',v) - nu('ac',v) - ...
 nu('cc',v) - nu('dd',v) - nu('da',v) - nu('bb',v);
end
\end{verbatim}
\vspace{10pt}

We can now check the assertions of the Lemma. To show the rest of item (\ref{item:computer part less than 6}), we check that
$\nu(x_1 x_2 x_3 x_4) - \nu(x_1 x_4 x_3 x_2) \leq 6$ for all two letter words $x_i$ in the alphabet $\Acl = \{ \att, \btt, \ctt, \dtt \}$.

\vspace{10pt}
\begin{verbatim}
function check = nu_less_than_6
check = true;
A = ['a','b','c','d'];
for x_11 = A
for x_12 = A
for x_21 = A
for x_22 = A
for x_31 = A
for x_32 = A
for x_41 = A
for x_42 = A
    x_1 = [x_11,x_12];
    x_2 = [x_21,x_22];
    x_3 = [x_31,x_32];
    x_4 = [x_41,x_42];
    nu_temp = nu_total([x_1,x_2,x_3,x_4])-nu_total([x_1,x_4,x_3,x_2]);
    %%% check if nu_temp is strictly larger than 6:
    if nu_temp >= 7
        check = false;
    end
end
end
end
end
end
end
end
end
end
\end{verbatim}
\vspace{10pt}
This returns \texttt{true}.

To check item (\ref{item: no cc occurences broken}), we use the following code:

\vspace{10pt}
\begin{verbatim}
function check = nu_6_no_c_broken
check = true;
A = ['a','b','c','d'];
for x_11 = A
for x_12 = A
for x_21 = A
for x_22 = A
for x_31 = A
for x_32 = A
for x_41 = A
for x_42 = A
    x_1 = [x_11,x_12];
    x_2 = [x_21,x_22];
    x_3 = [x_31,x_32];
    x_4 = [x_41,x_42];
    nu_temp = nu_total([x_1,x_2,x_3,x_4])-nu_total([x_1,x_4,x_3,x_2]);
    if nu_temp == 6
        %%check if any of the letters between x_1 and x_2, x_2 and x_3, x_3 
        %%and x_4, and x_4 and x_1 are c's:
        if x_12 == 'c' && x_21 == 'c'
            check = false;
        elseif x_22 == 'c' && x_31 == 'c'
            check = false;
        elseif x_32 == 'c' && x_41 == 'c'
            check = false;
        elseif x_42 == 'c' && x_11 == 'c'
            check = false;
        end
    end
end
end
end
end
end
end
end
end
end
\end{verbatim}
\vspace{10pt}
This also returns \texttt{true}. This finishes the proof of Lemma \ref{lemma: nu}.

{\small
\bibliographystyle{alpha}
\bibliography{complex_cl}}

\vfill

\noindent
\emph{Nicolaus Heuer}\\[.5em]
  {\small
  \begin{tabular}{@{\qquad}l}
DPMMS,    University of Cambridge \\
    \textsf{nh441@cam.ac.uk},
    \textsf{https://www.dpmms.cam.ac.uk/$\sim$nh441}
  \end{tabular}}

\end{document}